\newcommand{\ci}[1]{\mathscr{#1}}
\newcommand{\g}[1]{\mathfrak{#1}}
\newcommand{\alfa}{\alpha}
\newcommand{\R}{\mathbf{R}}
\newcommand{\C}{\mathbf{C}}
\newcommand{\de}{\partial}
\renewcommand{\H}{\mathbf{H}}
\newcommand{\N}[1]{\left\lVert#1\right\rVert}
\newcommand{\bra}{\left\langle}
\newcommand{\ket}{\right\rangle}
\renewcommand{\phi}{\varphi}
\newcommand{\con}[1]{\overline{#1}}
\newcommand{\cerchio}[1]{\accentset{\smash{\raisebox{-0.12ex}{$\scriptstyle\circ$}}}{#1}\rule{0pt}{2.3ex}}
\newtheorem{proposizione}{Proposition}[section]
\newtheorem{teorema}[proposizione]{Theorem}
\newtheorem{lemma}[proposizione]{Lemma}
\theoremstyle{definition}
\theoremstyle{remark}
\title{A CR structure with blowing up solutions to the CR Yamabe problem}
\author{Claudio Afeltra, Andrea Pinamonti}
\date{}
\begin{document}

\maketitle

\begin{abstract}
 We prove the existence of a CR structure on $S^3$ such that the set of solutions to the CR Yamabe problem is not compact and admits a blowing-up sequence.
 Such CR structure is built deforming the standard CR structure of $S^3$ in the direction of the Rossi sphere CR structure on small balls, and the existence of the blowing-up sequence of solutions is proved through the Lyapunov-Schmidt method.
\end{abstract}

\section{Introduction}
The efforts to solve the Yamabe problem, that is the problem of finding a Riemannian metric conformally equivalent to a given one and with constant scalar curvature, have lead to huge developments in the field of geometric analysis, until the final part of the solution was completed by Richard Schoen in 1984, after previous contributions by Yamabe, Trudinger, Aubin and others (see \cite{LP} for a survey of the history and proof of the problem).

After the solution of the problem, it was conjectured that the set of the solutions thereto is compact. After numerous partial results, the conjecture was proved to be true in dimension $n\le 24$ by Khuri, Marques and Schoen (see \cite{KMS}), while surprisingly in higher dimension it was proved to be false by Brendle (for $n\ge 52$, see \cite{B}) and Brendle and Marques (for $25\le n\le 51$, see \cite{BM}).

\vspace{3mm}

The same problems can be posed in CR geometry.
CR geometry is the study of manifolds endowed with a certain geometric structure which arises in the study of real hypersurfaces of complex manifolds (see Section \ref{SezionePreliminari} for definitions). These manifolds carry a natural cotact structure, and the choice of a contact form determines a rich geometric structure, including a connection known as Tanaka-Webster connection, and a Riemannian metric.
Analogously to Riemannian geometry, the curvature tensor of the connection can be contracted twice with the metric giving rise to a scalar invariant known as Webster curvature.
Since the choice of a contact form is determined up to the multiplication by a nowhere zero function, it is natural to pose the so called CR Yamabe problem, that is the problem of finding a contact form such that the associated Webster curvature is constant.

This problem has striking similarities with the (Riemannian) Yamabe problem.
In particular the non conformally flat, high dimensional case was solved by Jerison and Lee in 
\cite{JL1} and \cite{JL3} by following the same strategy of the analogous Riemannian case; the remaining cases were solved by Gamara and Yacoub in \cite{G} and \cite{GY} using the method of critical points at infinity.

\vspace{3mm}

It is therefore natural to study the question of the compactness of the set of solutions to the CR Yamabe problem.
The study of this problem in Riemannian geometry uses blow-up analysis, which consists in taking a sequence which supposedly violates compactness and rescaling and dilating it around a local maximum in such a way to get a solution to the Yamabe problem in $\R^n$. The analysis therefrom  proceeds using the classification theorem by Caffarelly, Gidas and Spruck (see \cite{CGS}).
Unfortunately the reproduction of this proof in CR geometry presents the obstacle that blowing up a CR manifold around a point one gets the Heisenberg group $\H^n$, and for it there does not currently exist an analogous to the theorem of Caffarelly, Gidas and Spruck for $\H^n$.

In 2023, Catino, Li, Monticelli and Roncoroni proved such a classification theorem for $\H^1$ (see \cite{CLMR}). This allowed the first author of this work to prove a compactness theorem (see \cite{A}).
With respect to the Riemannian case, such theorem had an additional hypothesis, that is that the pseudohermitian mass of the blow-up structure at every point is positive. The pseudohermitian mass is a quantity defined for asymptotically flat CR manifolds which is analogous to the ADM mass for asymptotically flat manifolds, and similarly appears in the expansion of the conformal sublaplacian; but unlike its Riemannian counterpart, it can be negative also for CR manifolds with positive Webster curvature, as was showed by an example by Cheng, Malchiodi and Yang in \cite{CMY}.

In \cite{A} the author raised the question of whether such hypothesis on the pseudohermitian mass was necessary in the compactness theorem proved therein.
In this article, we answer affermatively to the question and find a counterexample.

\begin{teorema}\label{Teorema}
 There exists a CR structure on $S^3$, not equivalent to the standard one, such that the associated CR Yamabe equation
 $$L_{J,\theta}u = 2u^3$$
 (where $L_{J,\theta}$ is the conformal sublaplacian, defined in Section \ref{SezionePreliminari}) has a set of solutions $\{u_k\}_{k\in\mathbf{N}}$ with $\max u_k\to\infty$.
\end{teorema}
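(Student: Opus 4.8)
The plan is to produce the solutions $u_k$ by a Lyapunov--Schmidt (finite-dimensional) reduction, arranging that they concentrate at a sequence of scales tending to zero near the regions where the structure has been pushed toward the Rossi sphere.

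\emph{The structure and the approximate solutions.} First I would fix a point $p\in S^3$ and a sequence of pairwise disjoint CR balls $B_k=B(\xi_k,r_k)$ with $r_k\to 0$ clustering at $p$. On each $B_k$ I would perturb the standard structure $J_0$ in the Rossi direction, $J=J_0+\tau_k\,\chi_k\,\dot J$, where $\dot J$ is the infinitesimal Rossi deformation, $\chi_k$ a cutoff supported in $B_k$, and the pair $(r_k,\tau_k)$ chosen so that, after rescaling $B_k$ to unit size, every ball carries one and the same model deformation. Since the Rossi sphere has negative pseudohermitian mass (as in the example of Cheng--Malchiodi--Yang), this model deformation can be taken to make the local mass negative; the same computation shows $J\not\cong J_0$. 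As approximate solutions I would use the Jerison--Lee extremals $W_{\lambda,\eta}$ of the CR Yamabe quotient on $\H^1$, transplanted to $S^3$ in CR normal coordinates centered inside $B_k$, with concentration scale $\lambda$ and center $\eta$.

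\emph{Linear theory and reduction.} Writing $u=W_{\lambda,\eta}+\phi$ and linearizing $L_{J,\theta}u-2u^3$ at the bubble, the model kernel on $\H^1$ is spanned by the derivatives of $W$ with respect to its dilation and Heisenberg-translation parameters. The non-degeneracy of $W$ on $\H^1$ (available through the classification of positive solutions of \cite{CLMR} used in \cite{A}) provides uniform invertibility of the linearized operator on the $L^2$-orthogonal complement of this kernel. A contraction-mapping argument in the natural weighted Sobolev norm then furnishes a small correction $\phi=\phi_{\lambda,\eta}$ such that $W_{\lambda,\eta}+\phi_{\lambda,\eta}$ solves the equation modulo the kernel, with quantitative control on $\phi$ and on its dependence on $(\lambda,\eta)$.

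\emph{Reduced energy and conclusion.} Genuine solutions near $B_k$ then correspond to critical points of the reduced functional $\Phi_k(\lambda,\eta)=E_{J,\theta}\!\left(W_{\lambda,\eta}+\phi_{\lambda,\eta}\right)$, where $E_{J,\theta}$ is the CR Yamabe energy. After rescaling $B_k$ to unit size I would compute its asymptotic expansion; to leading order it becomes a single $k$-independent function $\Psi(s,\zeta)$ of the rescaled scale $s=\lambda/r_k$ and center $\zeta$, in which a deformation term produced by testing the Rossi perturbation of the operator against the bubble competes with the term carrying the (negative) pseudohermitian mass. The negative sign coming from Rossi is exactly what lets $\Psi$ acquire a nondegenerate critical point $(s_*,\zeta_*)$ at a finite scale $s_*>0$. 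Unwinding the rescaling, each $B_k$ yields a critical point $(\lambda_k,\eta_k)$ of $\Phi_k$ with $\lambda_k=s_*r_k+o(r_k)\to 0$, hence a solution $u_k=W_{\lambda_k,\eta_k}+\phi_{\lambda_k,\eta_k}$ of $L_{J,\theta}u=2u^3$ with $\max u_k\sim\lambda_k^{-(Q-2)/2}=\lambda_k^{-1}\to\infty$ (with $Q=4$ the homogeneous dimension of $\H^1$); distinctness of the $u_k$ follows from their disjoint concentration regions.

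\emph{Main difficulty.} The crux is the reduced-energy expansion: tracking how the Rossi deformation and the pseudohermitian mass enter, with their orders in $\lambda$ and their signs, and showing that they conspire to create a critical point at a finite rescaled scale rather than pushing the bubble to $0$ or $\infty$. Establishing the uniform linear estimates of the second step in the subelliptic setting of the sub-Laplacian on $\H^1$, where one must replace elliptic regularity by its Folland--Stein counterpart and handle the degenerate kernel carefully, is the other principal technical burden.
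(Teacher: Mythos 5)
Your overall strategy is the one the paper follows: Rossi-direction perturbations of the standard structure supported on a sequence of disjoint shrinking balls, a Lyapunov--Schmidt reduction over the manifold of Heisenberg bubbles, an expansion of the reduced energy whose leading correction is governed by the (negative) pseudohermitian mass of the Rossi blow-up, and one concentrating solution per ball, distinguished by their disjoint concentration regions. Two points in your sketch, however, conceal genuine difficulties, and the first of them is a real gap as stated.

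First, the order of the Lyapunov--Schmidt error. If you linearize around the plain Jerison--Lee bubble, then $L_{J,\theta}W_{\lambda,\eta}-2W_{\lambda,\eta}^3=(L_{J,\theta}-L_{J_0,\theta})W_{\lambda,\eta}$ is of size $O(\tau_k)$, first order in the deformation parameter, so the correction $\phi_{\lambda,\eta}$ is also $O(\tau_k)$ and its quadratic contribution to the reduced energy is $O(\tau_k^2)$ --- exactly the order of the mass term $\sim\tau_k^2\lambda^{-2}$ whose sign you need to detect. With only the "quantitative control" of a contraction argument, the expansion of $\Phi_k$ does not close at the relevant order. The paper circumvents this by taking as ansatz the refined approximate solutions $\breve{U}_{x,\lambda,s}$ of \cite{CMY}, for which $\N{d\ci{J}_J(\breve{U}_{x,\lambda,s})}\lesssim s^2/\lambda^2$ (Lemma \ref{StimaDifferenziale}); the correction then contributes only $O(s^4/\lambda^4)$ and the term $24\pi s^2/\lambda^2$ of \eqref{EspansioneCMY2} survives. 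Some device of this kind (an ansatz absorbing the first-order response, as in \cite{B}) is indispensable and is missing from your outline, even though you correctly identify the reduced-energy expansion as the crux.

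Second, the mechanism producing the critical point. The leading interior correction is $+24\pi s_k^2/\lambda^2$, positive (consistently with the negative mass) but monotone in the scale, so there is no interior critical point of a clean $k$-independent limit profile $\Psi$ at a finite rescaled scale, and no nondegeneracy to speak of. The paper instead maximizes the reduced functional over a box $\Omega_k$ in $(x,\lambda)$ and shows by separate boundary estimates (Lemmas \ref{StimaFondo}--\ref{StimaCima}) that all boundary values are closer to $4\pi^2$ than the interior ones; the critical point lies in the transition regime where the bubble width is comparable to the perturbed ball, and only its existence is obtained. Your picture is morally the same, but the claim of a nondegenerate critical point of a limit function is stronger than what is proved or needed. (Minor: nondegeneracy of the bubble comes from \cite{MU}, not from the Liouville theorem of \cite{CLMR}, which gives uniqueness of positive solutions; and the paper's final lower bound on $\max u_k$ is run through the $L^4$ norm rather than a pointwise estimate on the correction.)
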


In order to build this counterexample, we took inspiration by the strategy of Brendle in \cite{B} for building an analogous counterexample for Riemannian manifolds of dimension $n\ge 52$.
The reason why the compactness theorem holds only in dimension $n\le 24$ is that a certain quadratic form arising from the blow-up analysis is positive definite in such dimensions, while this is not true for $n\ge 25$.
The starting point of Brendle's proof was to deform the Euclidean metric of $\R^n$ in a direction in which the quadratic form is negative.
Then he considered the manifold consisting of ``bubbles'' (that is, solutions to the Yamabe problem for the Euclidean metric) and applied the Lyapunov-Schmidt method, a method to find critical points of a perturbed functional starting from a manifold of critical points of the unperturbed functional.
Through delicate estimates, he proved that the existence of deformed metrics admitting arbitrarily large solutions to the Yamabe problem, and finally he build the required counterexample by modifying the Euclidean metric on the union of a sequence of disjoint balls $B_{r_k}(x_k)$.

In our case, since the essential reason for the failure of the compactness theorem for three-dimensional CR manifolds is the fact that the pseudohermitian mass can be negative, our strategy is to deform the standard CR structure of $\H^1$ in the direction of the only known counterexample to the CR positive mass theorem, which are known as Rossi spheres $S_s^3$.
In \cite{CMY} it is proved that the pseudohermitian mass of their blow-up structure satisfies $m_s = -18\pi s^2 + o(s^2)$, and fine estimates for the associated CR Yamabe functional.
By making use of these results, we are able to study the perturbed functional restricted to an appropriate region of the manifold of bubbles and to build a CR structure with a blowing up sequence of solutions to the CR Yamabe problem.


\section{Preliminaries}\label{SezionePreliminari}
In this section we outline the basic definitions in CR geometry that we will need.
For an introduction to CR geometry we refer to \cite{DT}.

A CR structure on a three-dimensional manifold $M$ is a subbundle $\ci{H}$ of $TM\otimes\C$ such that $\ci{H}\cap\overline{\ci{H}}=0$.
The Levi distribution is $H(M)=\g{Re}(\ci{H}\oplus\overline{\ci{H}})$, and it carries the natural complex structure defined by $J(Z+\con{Z})=i(Z-\con{Z})$.
If $H(M)$ is fixed, a CR structure can be equivalently defined as an complex structure $J$ on it. This is the approach we will use to build the desired counterexample.

On an orientable CR manifold a contact form $\theta$ such that $\left.\theta\right|_{H(M)}=0$ can be chosen.
Since in this article such a form will be considered fixed and the main concern is the CR structure, we will generally omit the dependence of the various geometric objects by $\theta$.

The choice of a contact form on a CR manifold provides a rich geometry, in particular it allows to define a subriemannian metric $G_{\theta}$ on $H(M)$ and a connection $\nabla$ called the Tanaka-Webster connection.
Contracting twice the curvature tensor of such connection with $G_{\theta}$ allows to define a scalar invariant known as Webster curvature which is analogous to the scalar curvature in Riemannian geometry.

The contact form gives rise to the volume form $\theta\wedge d\theta$, which in this article will be mostly omitted in the integrals. Composing the divergence with respect to such volume form with the subriemannian gradient a second order subelliptic differential operator $\Delta_J$ called the sublaplacian is defined.

Any other contact form for a given CR structure is of the form $u^2\theta$ with $u\ne 0$ everywhere.
The Webster curvature with respect to the contact form $\widetilde{\theta}=u^2\theta$ is given by the formula
$$\widetilde{R} = u^{-3}(-4\Delta_J+R)u$$
and therefore, calling $L_J= -4\Delta_J+R$ the conformal sublaplacian, the equation for the prescribed Webster curvature is
\begin{equation}\label{EquazioneYamabeCR}
 L_Ju=2u^3.
\end{equation}

The most important three-dimensional CR structure, playing the same role as $\R^n$ in the category of Riemannian manifolds, is the Heisenberg group $\H^1$.

It is defined as $\C\times\R$ with the group law $(z,t)\cdot(w,s) = (z+w,t+s+2\g{Im}(z\cdot\con{w}))$, and it carries the left invariant CR structure generated by the left-invariant vector field
$$Z= \frac{\de}{\de z} +i\overline{z}\frac{\de}{\de t}$$
and the contact structure
$$\theta= dt+i\left( zd\con{z}-\con{z}dz\right).$$
We call $J_0$ the associated complex structure on $H(\H^1)$.
$\H^1$ admits a family of CR and group automorphisms analogous to the dilations of $\R^n$, defined by $\delta_{\lambda}(z,t)=(\lambda z,\lambda^2 t)$, and called Heisenberg dilations.
The Korányi norm is defined as $|(z,t)|=\left(|z|^4+t^2\right)^{\frac{1}{4}}$, and satisfies $|\delta_{\lambda}x|=\lambda|x|$.

The solutions to the CR Yamabe equation \eqref{EquazioneYamabeCR} on $\H^1$ are
$$U(z,t)=\frac{c_1}{\left(t^2+(1+|z|^2)^2\right)^{\frac{1}{2}}}$$
and the translations and dilations thereof, $u_{x,\lambda}=\lambda U\circ\delta_{\lambda}\circ L_x$ (where $L_x(y)=x^{-1}y$).
We will use also the vector field $T=\frac{\de}{\de t}$ (the Reeb vector field) and the generator of the dilations
$$\Xi = \left(zZ+\con{z}Z\right) +2tT.$$
Let $X$ be the Hilbert space
$$X= \{u\in L^2_{\rm{loc}}(\H^1)| \N{\nabla_{J_0} u}_{L^2} <\infty\}$$
with the scalar product
$$\bra u,v\ket_X= \int_{\H^1}\nabla_{J_0}u\cdot\nabla_{J_0}v.$$
By standard functional analysis, $\Delta_{J_0}$ is an isomorphism between $X$ and the dual thereof $X'$.

By the Sobolev inequality in the Heisenberg group
$\N{u}_{L^4}\le C \N{u}_X$.

We also define the Heisenberg analogous of the $C^2$ norm,
$$\N{u}_{\Gamma^2} = \N{u}_{L^{\infty}}+\N{Zu}_{L^{\infty}}+\N{\con{Z}u}_{L^{\infty}}+\N{Z^2u}_{L^{\infty}}+\N{\con{Z}^2u}_{L^{\infty}} + \N{Z\con{Z}u}_{L^{\infty}}+\N{\con{Z}Zu}_{L^{\infty}}.$$


\subsection{Rossi spheres}\label{SottosezioneRossi}
The standard CR structure on $S^2$ induced by the immersion in $\C^2$ is generated by
$$W= \overline{w_2}\frac{\de}{\de w_1} - \overline{w_1}\frac{\de}{\de w_2}.$$
Then the Rossi sphere are defined as $S^2$ with the contact form generated by $W+s\con{W}$.

We recall that the Kelvin transform is defined as the function $F:S^3\setminus(0,-1)\to\H^1$ given by
$$F(w_1,w_2)=\left(\frac{w_1}{1+w_2},\g{Re}\left(i\frac{1-w_2}{1+w_2}\right)\right)$$
whose inverse is
$$F^{-1}(z,t) = \left(\frac{2iz}{t+i(1+|z|^2)},\frac{-t+i(1-|z|^2)}{t+i(1+|z|^2)}\right).$$
It is known that the push-forward through $F$ of the standard CR structure on $S^2$ is the Heisenberg CR structure. We want to compute the push-forward of the Rossi CR structure.

We have
$$dF\left(\frac{\de}{\de w_1}\right) = \frac{1}{1+w_2}\frac{\de}{\de z} = \frac{1}{1+\frac{-t+i(1-|z|^2)}{t+i(1+|z|^2)}}\frac{\de}{\de z} = \frac{t+i(1+|z|^2)}{2i}\frac{\de}{\de z} $$
and
$$dF\left(\frac{\de}{\de w_2}\right) = -w_1\frac{1}{(1+w_2)^2}\frac{\de}{\de z} +\frac{1}{2}i\frac{-(1+w_2) -(1-w_2)}{(1+w_2)^2}\frac{\de}{\de t}= -w_1\frac{1}{(1+w_2)^2}\frac{\de}{\de z} -i\frac{1}{(1+w_2)^2}\frac{\de}{\de t}=$$
$$= -\frac{2iz}{t+i(1+|z|^2)}\frac{(t+i(1+|z|^2))^2}{-4}\frac{\de}{\de z} -i\frac{(t+i(1+|z|^2))^2}{-4}\frac{\de}{\de t}=$$
$$= \frac{iz}{2}(t+i(1+|z|^2))\frac{\de}{\de z} +\frac{i}{4}(t+i(1+|z|^2))^2\frac{\de}{\de t}$$
and so
$$dF(W)= \frac{-t-i(1-|z|^2)}{t-i(1+|z|^2)}dF\left(\frac{\de}{\de w_1}\right) - \frac{-2i\overline{z}}{t-i(1+|z|^2)}dF\left(\frac{\de}{\de w_2}\right) =$$
$$= \frac{-t-i(1-|z|^2)}{t-i(1+|z|^2)}\frac{t+i(1+|z|^2)}{2i}\frac{\de}{\de z} + \frac{2i\overline{z}}{t-i(1+|z|^2)}\left(\frac{iz}{2}(t+i(1+|z|^2))\frac{\de}{\de z} +\frac{i}{4}(t+i(1+|z|^2))^2\frac{\de}{\de t}\right) = $$
$$= \frac{t+i(1+|z|^2)}{t-i(1+|z|^2)}\frac{i}{2}\left(t+i(1-|z|^2) +2i|z|^2\right)\frac{\de}{\de z} -\frac{1}{2}\overline{z}\frac{(t+i(1+|z|^2))^2}{t-i(1+|z|^2)}\frac{\de}{\de t}=$$
$$= \frac{(t+i(1+|z|^2))^2}{t-i(1+|z|^2)}\frac{i}{2}\frac{\de}{\de z} -\frac{1}{2}\overline{z}\frac{(t+i(1+|z|^2))^2}{t-i(1+|z|^2)}\frac{\de}{\de t}=$$
$$= \frac{i}{2}\frac{(t+i(1+|z|^2))^2}{t-i(1+|z|^2)}Z = \frac{i}{2}\frac{(t+i(1+|z|^2))^3}{t^2+(1+|z|^2)^2}Z.$$
Therefore the push-forward of the Rossi sphere CR structure through $F$ is generated by the vector field
$$Z^s = Z^s_1 = Z + s\frac{(t-i(1+|z|^2))^3}{(t+i(1+|z|^2))^3}\overline{Z} = Z + s\phi(z,t)\overline{Z}$$
where
$$ \phi(z,t) = \left(\frac{t-i(1+|z|^2)}{t+i(1+|z|^2)}\right)^3.$$

\section{Lyapunov-Schmidt method}
Let us define the four-dimensional submanifold of $X$
$$\ci{M} = \left\{ U_{x,\lambda} \;\middle|\; x\in\H^1, \lambda\in(0,\infty)\right\}$$
and $\ci{E}_{x,\lambda}$ the orthogonal to the tangent thereof at the point $U_{x,\lambda}$, that is
$$\ci{E}_{x,\lambda} = \operatorname{span}(Z_1U_{x,\lambda}, Z_{\con{1}}U_{x,\lambda}, TU_{x,\lambda}, \Xi U_{x,\lambda})^{\perp} =$$
$$= \left\{u\in X \;\middle|\; \int_{\H^1}\nabla_{J_0}u\nabla_{J_0}(Z_1U_{x,\lambda})=\int_{\H^1}\nabla_{J_0}u\nabla_{J_0}(Z_{\con{1}}U_{x,\lambda})=\right.$$
$$\left.=\int_{\H^1}\nabla_{J_0}u\nabla_{J_0}(TU_{x,\lambda})=\int_{\H^1}\nabla_{J_0}u\nabla_{J_0}(\Xi U_{x,\lambda})=0\right\}$$
We notice that the CR Yamabe equation for $J$ is the Euler-Lagrange equation for the functional
$$\ci{J}_J(u)= \int_{\H^1}uL_Ju - \int_{\H^1}u^4$$
in the space $X$.

\begin{proposizione}\label{DifferenzialeSecondoInvertibile}
 There exists a constant $\alfa$ such that if $J$ is a CR structure on $\H^1$ coinciding with $J_0$ on $\H^1\setminus B_1(0)$ and such that $\N{J-J_0}_{\Gamma^2}\le\alfa$ then for every $(x,\lambda)\in\H^1\times(0,\infty)$ the operator $d^2\ci{J}_J(U_{x,\lambda})$ is invertible on $\ci{E}_{x,\lambda}$, and the norms of $d^2\ci{J}_J(U_{x,\lambda})$ and $\left(\left.d^2\ci{J}_J(U_{x,\lambda})\right|_{\ci{E}_{x,\lambda}}\right)^{-1}$ are bounded independently by $x$, $\lambda$ and $J$.
\end{proposizione}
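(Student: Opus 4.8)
The plan is to treat the statement as a spectral perturbation of the model case $J=J_0$, for which it reduces to a uniform gap estimate for the linearized CR Yamabe operator. The first observation is that $\ci{J}_J(u)-\ci{J}_{J_0}(u)=\int_{\H^1}u(L_J-L_{J_0})u$ is \emph{quadratic} in $u$, so the two Hessians differ by a bilinear form that does not depend on the base point:
$$d^2\ci{J}_J(U_{x,\lambda})[v,w]=d^2\ci{J}_{J_0}(U_{x,\lambda})[v,w]+2\int_{\H^1}v(L_J-L_{J_0})w.$$
Representing the Hessians through the Riesz isomorphism of $X$ as operators $A_J(x,\lambda)=A_{J_0}(x,\lambda)+B$, where $\bra Bv,w\ket_X=2\int_{\H^1}v(L_J-L_{J_0})w$, the whole dependence on $J$ is confined to the single operator $B$.

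I would first analyze the unperturbed operator. Since $R_{J_0}=0$, one finds $d^2\ci{J}_{J_0}(U_{x,\lambda})[v,w]=8\bra v,w\ket_X-12\int_{\H^1}U_{x,\lambda}^2vw$, so that $A_{J_0}(x,\lambda)=8\,\id-12K_{x,\lambda}$, where $K_{x,\lambda}v=(-\Delta_{J_0})^{-1}(U_{x,\lambda}^2v)$ is, by the Folland--Stein embedding and Rellich compactness, a positive compact self-adjoint operator on $X$, with eigenvalues $\mu_n\to 0$; hence $\operatorname{spec}(A_{J_0})$ is discrete and accumulates only at $8$. An eigenvector of $K_{x,\lambda}$ of eigenvalue $\tfrac{2}{3}$ is exactly a solution in $X$ of the linearized equation $L_{J_0}v=6U_{x,\lambda}^2v$, and the crucial input is the \emph{non-degeneracy} of the bubble: the space of such solutions is precisely $T_{U_{x,\lambda}}\ci{M}=\operatorname{span}(Z_1U_{x,\lambda},Z_{\con{1}}U_{x,\lambda},TU_{x,\lambda},\Xi U_{x,\lambda})$. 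Granting this, $\tfrac{2}{3}$ is the only eigenvalue producing $0$ in $\operatorname{spec}(A_{J_0})$, it is isolated, and $K_{x,\lambda}$ leaves both $T_{U_{x,\lambda}}\ci{M}$ and its orthogonal complement $\ci{E}_{x,\lambda}$ invariant; thus $\left.A_{J_0}(x,\lambda)\right|_{\ci{E}_{x,\lambda}}$ is invertible with spectral gap $\eta:=\operatorname{dist}(0,\operatorname{spec}(\left.A_{J_0}\right|_{\ci{E}_{x,\lambda}}))>0$.

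Uniformity in $(x,\lambda)$ I would get from symmetry: the map $v\mapsto\lambda\,(v\circ\delta_{\lambda}\circ L_x)$ is a unitary of $X$ (this is exactly the normalization in $U_{x,\lambda}=\lambda\,U\circ\delta_{\lambda}\circ L_x$, Heisenberg translations and dilations being CR automorphisms of $(\H^1,J_0)$), and it conjugates $K_{0,1}$ into $K_{x,\lambda}$ and $\ci{E}_{0,1}$ onto $\ci{E}_{x,\lambda}$; hence both $\|A_{J_0}(x,\lambda)\|\le 8+12\mu_1$ and the gap $\eta$ are independent of $(x,\lambda)$. For the perturbation, since $L_J-L_{J_0}=-4(\Delta_J-\Delta_{J_0})+(R_J-R_{J_0})$ is a second-order operator supported in $B_1(0)$ whose coefficients are bounded by $C\N{J-J_0}_{\Gamma^2}$, integration by parts together with the inequality $\N{v}_{L^4}\le C\N{v}_X$ on the bounded set $B_1(0)$ yields $|\bra Bv,w\ket_X|\le C\N{J-J_0}_{\Gamma^2}\N{v}_X\N{w}_X$, so $\|B\|\le C\alfa$ with $C$ independent of $(x,\lambda)$ and $J$. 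Choosing $\alfa$ with $C\alfa<\tfrac{\eta}{2}$, the operator $P_{\ci{E}_{x,\lambda}}A_J(x,\lambda)|_{\ci{E}_{x,\lambda}}=\left.A_{J_0}(x,\lambda)\right|_{\ci{E}_{x,\lambda}}+P_{\ci{E}_{x,\lambda}}B|_{\ci{E}_{x,\lambda}}$ is invertible by a Neumann series, with inverse bounded by $(\eta-C\alfa)^{-1}$ and $\|A_J(x,\lambda)\|\le 8+12\mu_1+C\alfa$, both uniform.

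The main obstacle is the non-degeneracy statement, namely that the kernel of $L_{J_0}-6U^2$ in $X$ is four-dimensional and spanned by the generators of the translations and the dilation. This is the CR counterpart of the classical non-degeneracy of the Aubin--Talenti bubble; I would establish it by transporting the problem to the standard $S^3$ through the Kelvin transform, where $U$ becomes a constant multiple of $1$ and the linearized operator turns into a sublaplacian-type operator with constant potential, so that its kernel is identified with a single eigenspace of the sublaplacian on $S^3$ via the representation theory of $SU(2)$; a dimension count then matches the four symmetry generators. Once this spectral input and the uniform bound on $B$ are in place, everything else is routine spectral perturbation.
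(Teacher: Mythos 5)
Your proposal is correct and follows essentially the same route the paper takes: the paper's proof is the standard spectral-perturbation argument from Brendle, with the non-degeneracy of the bubble supplied by Lemma 5 of \cite{MU} (the kernel identification you propose to obtain via the Kelvin transform and the spectrum of the sublaplacian on $S^3$) and the uniform smallness of the perturbation term supplied by the estimates of Propositions \ref{StimaCurvatura} and \ref{DifferenzaSublaplaciani}. There is no substantive difference in approach.
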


The proof of the above Proposition is standard in the Riemannian case (see \cite{B}); in this case the proof is the same except that Lemma 5 from \cite{MU} has to be used instead of the analogous result for $\R^n$, and the estimates in Propositions \ref{StimaCurvatura} and \ref{DifferenzaSublaplaciani}.

\begin{proposizione}\label{EsistenzaLS}
 There exists a constant $\alfa$ such that if $J$ is a CR structure on $\H^1$ coinciding with $J_0$ on $\H^1\setminus B_1(0)$ and such that $\N{J-J_0}_{\Gamma^2}\le\alfa$ then for every $(x,\lambda)\in\H^1\times(0,\infty)$ there exists an unique $v_{x.\lambda}\in\ci{E}_{x,\lambda}$ with $\N{v}\lesssim\alfa$ such that
 $$\pi_{\ci{E}_{x,\lambda}}\left(\nabla\ci{J}_J(U_{x,\lambda}+v_{x,\lambda})\right) = 0.$$
\end{proposizione}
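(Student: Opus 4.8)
The plan is to solve the fixed-point equation $\pi_{\ci{E}_{x,\lambda}}(\nabla\ci{J}_J(U_{x,\lambda}+v))=0$ for $v\in\ci{E}_{x,\lambda}$ by a contraction mapping argument, using the invertibility of the second differential established in Proposition \ref{DifferenzialeSecondoInvertibile}. First I would expand the gradient around $U_{x,\lambda}$ as
\begin{equation*}
\pi_{\ci{E}_{x,\lambda}}\nabla\ci{J}_J(U_{x,\lambda}+v) = \pi_{\ci{E}_{x,\lambda}}\nabla\ci{J}_J(U_{x,\lambda}) + \pi_{\ci{E}_{x,\lambda}}d^2\ci{J}_J(U_{x,\lambda})[v] + R_{x,\lambda}(v),
\end{equation*}
where $R_{x,\lambda}(v)$ collects the higher-order terms. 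Writing $L := \left.\pi_{\ci{E}_{x,\lambda}}d^2\ci{J}_J(U_{x,\lambda})\right|_{\ci{E}_{x,\lambda}}$, which is invertible with inverse bounded uniformly in $x,\lambda,J$ by Proposition \ref{DifferenzialeSecondoInvertibile}, the equation becomes equivalent to the fixed-point problem
\begin{equation*}
v = \Phi_{x,\lambda}(v) := -L^{-1}\left(\pi_{\ci{E}_{x,\lambda}}\nabla\ci{J}_J(U_{x,\lambda}) + R_{x,\lambda}(v)\right).
\end{equation*}

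The two estimates driving the argument are a smallness bound on the constant term and a Lipschitz bound on the remainder. For the constant term, I would show $\N{\pi_{\ci{E}_{x,\lambda}}\nabla\ci{J}_J(U_{x,\lambda})}\lesssim\alpha$: since $U_{x,\lambda}$ solves the unperturbed Yamabe equation $\nabla\ci{J}_{J_0}(U_{x,\lambda})=0$, the whole of $\nabla\ci{J}_J(U_{x,\lambda})$ comes from the difference $L_J-L_{J_0}$, which is controlled by $\N{J-J_0}_{\Gamma^2}\le\alpha$ via the estimates of Propositions \ref{StimaCurvatura} and \ref{DifferenzaSublaplaciani}; here the hypothesis that $J$ coincides with $J_0$ outside $B_1(0)$ confines the perturbation to a fixed compact set. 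For the remainder, because $\ci{J}_J$ is a quartic functional, $R_{x,\lambda}$ consists of the genuinely nonlinear part of $u\mapsto u^4$, so one gets a bound of the form $\N{R_{x,\lambda}(v)-R_{x,\lambda}(w)}\lesssim(\N{v}+\N{w})\N{v-w}$ from the Sobolev embedding $\N{u}_{L^4}\le C\N{u}_X$ together with Hölder's inequality, with constants independent of $x,\lambda,J$.

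Combining these, on the ball $\{\N{v}\le C_0\alpha\}\cap\ci{E}_{x,\lambda}$ the map $\Phi_{x,\lambda}$ satisfies $\N{\Phi_{x,\lambda}(v)}\le\N{L^{-1}}(C\alpha + C'(C_0\alpha)^2)$ and $\N{\Phi_{x,\lambda}(v)-\Phi_{x,\lambda}(w)}\le\N{L^{-1}}C'(C_0\alpha)\N{v-w}$; choosing $C_0$ large enough and then $\alpha$ small enough makes $\Phi_{x,\lambda}$ a contraction of this ball into itself. The Banach fixed-point theorem then yields the unique $v_{x,\lambda}\in\ci{E}_{x,\lambda}$ with $\N{v_{x,\lambda}}\lesssim\alpha$, and uniqueness within the full space (not merely the small ball) follows from the Lipschitz bound once $\alpha$ is small, since any two solutions would have to lie in the contraction region.

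The main obstacle I expect is establishing the uniform-in-$(x,\lambda)$ bound on the constant term $\pi_{\ci{E}_{x,\lambda}}\nabla\ci{J}_J(U_{x,\lambda})$. The functions $U_{x,\lambda}$ degenerate as $\lambda\to 0$ or $\lambda\to\infty$ and concentrate or spread across $\H^1$, so the interaction between the localized perturbation (supported in $B_1(0)$) and the scaling family must be handled by the dilation-covariance of the setup. The delicate point is that, unlike in the standard Lyapunov-Schmidt application, here the perturbation lives on a fixed ball while the bubble moves and rescales, so one must exploit the precise decay of $L_J-L_{J_0}$ and the geometry of $\ci{E}_{x,\lambda}$ to keep the estimate uniform; this is exactly where the $\Gamma^2$-control of $J-J_0$ from Propositions \ref{StimaCurvatura} and \ref{DifferenzaSublaplaciani} is indispensable.
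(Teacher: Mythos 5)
Your proposal is correct and follows essentially the same route as the paper: decompose the projected gradient into the constant term $\pi_{\ci{E}_{x,\lambda}}\nabla\ci{J}_J(U_{x,\lambda})$ (nonzero only because of $L_J-L_{J_0}$, hence of size $\lesssim\alfa$ by Propositions \ref{StimaCurvatura} and \ref{DifferenzaSublaplaciani}), the linearization $L_J-6U_{x,\lambda}^2$ inverted uniformly via Proposition \ref{DifferenzialeSecondoInvertibile}, and the quadratic--cubic remainder $-6U_{x,\lambda}v^2-2v^3$ controlled through the Sobolev embedding $X\hookrightarrow L^4$ and its adjoint $L^{4/3}\hookrightarrow X'$, then contract on a ball of radius $\lesssim\alfa$. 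The one step you flag but do not execute --- the uniformity in $(x,\lambda)$ of the constant term --- is handled in the paper by a Heisenberg dilation change of variables reducing the integrals over $B_1(0)$ to uniformly bounded ones, exactly as you anticipate; note also that the proposition only claims uniqueness among $v$ with $\N{v}\lesssim\alfa$, so your stronger global uniqueness claim is not needed (and would require more care, since the cubic remainder is not globally Lipschitz with small constant).
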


\begin{proof}
 The equation to be solved can be written explicitely as
 $$\pi_{\ci{E}_{x,\lambda}}\circ\Delta_{J_0}^{-1}\left(L_J(U_{x,\lambda}+v_{x,\lambda})-2(U_{x,\lambda}+v_{x,\lambda})^3\right)=0,$$
 and therefore the solutions thereof are fixed points of the function
 $$F(v)= (\pi_{\ci{E}_{x,\lambda}}\circ\Delta_{J_0}^{-1}\circ(L_J-6U_{x,\lambda}^2))^{-1}\pi_{\ci{E}_{x,\lambda}}\circ\Delta_{J_0}^{-1}\left(-6U_{x,\lambda}v^2 - 2v^3 + L_JU_{x,\lambda} -2U_{x,\lambda}^3\right).$$
 Using Proposition \ref{DifferenzialeSecondoInvertibile}, the fact that the adjoint of the immersion $X\to L^4$ is $L^{\frac{4}{3}}\to X'$, and Propositions \ref{StimaCurvatura} and \ref{DifferenzaSublaplaciani}, for any $v\in\ci{E}_{x,\lambda}$
 $$\N{F(v)}_X \lesssim \N{-6U_{x,\lambda}v^2 - 2v^3 + L_JU_{x,\lambda} -2U_{x,\lambda}^3}_{X'} \lesssim$$
 $$\lesssim \N{-6U_{x,\lambda}v^2 - 2v^3 + L_JU_{x,\lambda} -2U_{x,\lambda}^3}_{L^{\frac{4}{3}}} \lesssim \N{U_{x,\lambda}v^2}_{L^{\frac{4}{3}}} + \N{v^3}_{L^{\frac{4}{3}}} + \N{L_JU_{x,\lambda} -2U_{x,\lambda}^3}_{L^{\frac{4}{3}}} \lesssim$$
 $$\lesssim \N{U_{x,\lambda}}_{L^4}\N{v^2}_{L^2} + \N{v^3}_{L^{\frac{4}{3}}} + \N{L_JU_{x,\lambda} -L_{J_0}U_{x,\lambda}}_{L^{\frac{4}{3}}} \lesssim$$
 $$\lesssim \N{v}_{L^4}^2 + \N{v}_{L^4}^3 + \N{(\Delta_J-\Delta_{J_0})U_{x,\lambda}}_{L^{\frac{4}{3}}} + \N{(R_J-R_{J_0})U_{x,\lambda}}_{L^{\frac{4}{3}}} \lesssim$$
 $$\lesssim \N{v}_X^2 + \N{v}_X^3 + \alfa\left(\int_{B_1(0)}\left(\lambda\lambda^2(1+\lambda|x|)^{-4}\right)^{\frac{4}{3}} + \int_{B_1(0)}\left(\lambda\lambda(1+\lambda|x|)^{-3}\right)^{\frac{4}{3}}\right)^{\frac{3}{4}} +$$
 $$ + \N{(R_J-R_{J_0})}_{L^2}\N{U_{x,\lambda}}_{L^4} \lesssim$$
 $$\lesssim \N{v}_X^2 + \N{v}_X^3 + \alfa\lambda^3\left(\lambda^{-4}\int_{B_{\lambda}(0)}\left(1+|x|\right)^{-\frac{8}{3}}\right)^{\frac{3}{4}} + \alfa\lambda^2\left(\lambda^{-4}\int_{B_{\lambda}(0)}(1+|x|)^{-4}\right)^{\frac{3}{4}} + \alfa \lesssim$$
 $$\lesssim \N{v}_X^2 + \N{v}_X^3 + \alfa.$$
 Therefore a ball of appropriate radius $\lesssim\alfa$ is invariant for $F$.
 
 On such a ball, with similar computations,
 $$\N{F(v)-F(w)}_X \lesssim  \N{U_{x,\lambda}(v^2-w^2)}_{X'} + \N{(v^3-w^3)}_{X'} \lesssim$$
 $$\lesssim \N{(v^2-w^2)}_{L^2} + \N{(v^3-w^3)}_{L^{\frac{4}{3}}}\lesssim$$
 $$\lesssim \N{v+w}_{L^4}\N{v-w}_{L^4} + \N{v^2+vw+w^2}_{L^2}\N{v-w}_{L^4}\lesssim$$
 $$\lesssim \alfa\N{v-w}_X +\alfa^2\N{v-w}_X$$
 therefore $F$ is a contraction thereon up to choosing $\alfa$ small enough.
\end{proof}

Finally we state a Proposition according to which restricted critical points to the ``perturbed'' manifold which was got in the above Proposition are unrestricted critical points, allowing us to reduce our study to a functional on the four-dimensional submanifold $\ci{M}$ of $X$.
The proof thereof is a classical tool in the Lyapunov-Schmidt method (see \cite{AM}).

\begin{proposizione}\label{ProposizioneLS}
 If $(x,\lambda)\in\H^1\times(0,\infty)$ is a critical point of $\ci{J}_J(U_{x,\lambda}+v_{x,\lambda})$, then $\nabla\ci{J}_J(U_{x,\lambda}+v_{x,\lambda}) = 0$.
\end{proposizione}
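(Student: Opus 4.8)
The plan is to turn the variational characterization of $v_{x,\lambda}$ into an algebraic statement about the gradient. By Proposition \ref{EsistenzaLS} the function $v_{x,\lambda}$ is built so that $\pi_{\ci{E}_{x,\lambda}}\nabla\ci{J}_J(U_{x,\lambda}+v_{x,\lambda})=0$, i.e. $\nabla\ci{J}_J(U_{x,\lambda}+v_{x,\lambda})$ is $X$-orthogonal to $\ci{E}_{x,\lambda}$. Since $\ci{E}_{x,\lambda}=(T_{U_{x,\lambda}}\ci{M})^{\perp}$ and the tangent space is spanned by the four coordinate derivatives $\tau_i:=\partial_i U_{x,\lambda}$ ($i=1,\dots,4$, the three Heisenberg-translation directions and the dilation direction, which at $U_{x,\lambda}$ are the vectors $Z_1U_{x,\lambda}$, $Z_{\con1}U_{x,\lambda}$, $TU_{x,\lambda}$, $\Xi U_{x,\lambda}$ up to an invertible change of basis), the gradient must lie in $T_{U_{x,\lambda}}\ci{M}$, so I may write
$$\nabla\ci{J}_J(U_{x,\lambda}+v_{x,\lambda})=\sum_{k=1}^4 c_k\,\tau_k.$$
The whole proposition is then equivalent to showing that all four coefficients $c_k$ vanish, and the hypothesis to be used is only that $(x,\lambda)$ is a critical point of the reduced functional $\Phi(x,\lambda):=\ci{J}_J(U_{x,\lambda}+v_{x,\lambda})$.

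First I would note that $(x,\lambda)\mapsto v_{x,\lambda}$ is of class $C^1$: this follows from the implicit function theorem applied to the fixed-point equation solved in Proposition \ref{EsistenzaLS}, whose linearization is invertible by Proposition \ref{DifferenzialeSecondoInvertibile}, so $\Phi$ is differentiable and the chain rule gives
$$\partial_i\Phi=\bra\nabla\ci{J}_J(U_{x,\lambda}+v_{x,\lambda}),\;\tau_i+\partial_i v_{x,\lambda}\ket_X=\sum_{k}c_k\bra\tau_k,\tau_i\ket_X+\sum_k c_k\bra\tau_k,\partial_i v_{x,\lambda}\ket_X.$$
The second sum is handled by differentiating the orthogonality constraints $\bra v_{x,\lambda},\tau_k\ket_X=0$ (which hold because $v_{x,\lambda}\in\ci{E}_{x,\lambda}$), giving $\bra\partial_i v_{x,\lambda},\tau_k\ket_X=-\bra v_{x,\lambda},\partial_i\tau_k\ket_X$. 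Hence the critical point equations $\partial_i\Phi=0$ read, for every $i$,
$$\sum_{k}c_k\big(G_{ki}-\bra v_{x,\lambda},\partial_i\tau_k\ket_X\big)=0,\qquad G_{ki}:=\bra\tau_k,\tau_i\ket_X.$$

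It remains to see that this homogeneous linear system has only the trivial solution. The matrix $G$ is the Gram matrix of a basis of $T_{U_{x,\lambda}}\ci{M}$, hence invertible, while the correction terms $\bra v_{x,\lambda},\partial_i\tau_k\ket_X$ are controlled by $\N{v_{x,\lambda}}_X\lesssim\alfa$. For $\alfa$ small enough the coefficient matrix is therefore a small perturbation of an invertible one, which forces $c=0$ and hence $\nabla\ci{J}_J(U_{x,\lambda}+v_{x,\lambda})=0$. The point requiring the most care, and the reason one cannot simply quote the bound $\N{v_{x,\lambda}}\lesssim\alfa$ naively, is the uniformity of this last step in $(x,\lambda)$: since the bubbles live at all scales $\lambda$ and all base points $x$, both $G$ and the second derivatives $\partial_i\tau_k$ scale with $\lambda$, so I would work with the parametrization of $\ci{M}$ adapted to the Heisenberg translations and to the logarithmic dilation parameter $\log\lambda$, in which---by the scale and translation covariance of the construction---all the $\N{\tau_i}_X$ and $\N{\partial_i\tau_k}_X$ are bounded above and below uniformly. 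In that gauge the normalized Gram matrix is uniformly nondegenerate and the perturbation is uniformly $O(\alfa)$, so the invertibility, and therefore the conclusion, holds for all $(x,\lambda)$ at once.
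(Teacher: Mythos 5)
Your proof is correct and follows essentially the same route as the paper, which does not write out the argument but refers to the classical Lyapunov--Schmidt reduction in \cite{AM}: the decomposition $\nabla\ci{J}_J(U_{x,\lambda}+v_{x,\lambda})=\sum_k c_k\tau_k$ forced by Proposition \ref{EsistenzaLS}, the differentiation of the orthogonality constraints, and the invertibility of the perturbed Gram matrix for $\alfa$ small are exactly the standard steps. Your closing remark about passing to the translation/dilation-covariant parametrization (which acts by isometries of $X$) is indeed the right way to make the uniformity in $(x,\lambda)$ precise.
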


\section{Construction of CR structures with large solutions}
Let $x_n=(\frac{1}{n},0,0)\in\H^1$ and let $r_k,R_k,s_k$ sequences converging to zero and such that the balls $B_{R_k}(x_k)$ are disjoint. Let $J$ be a CR structure coinciding with $J_0$ on $\H^1\setminus\bigcup B_{Ar_k}(x_k)$, with $J_{s_k}$
on $B_{r_k}(x_k)$, and on $B_{Ar_k}\setminus B_{r_k}$ with $J_f$, with $|f|\le s_k$.

Let us define
$$\Omega_k = \left\{U_{x,\lambda}\;\middle|\; |x-x_k|<R_k, \frac{\alfa}{R_k}<\lambda<\frac{\beta}{r_k}\right\}\subset\ci{M}$$
with $\alfa$, $\beta$ that will be chosen later.

Then we want to prove that, up to choosing the parameters appropriately, for every $k$ there exists a critical point of $\ci{J}_J(U_{x,\lambda}+v_{x,\lambda})$ in $\Omega_k$, that is, a solution to the CR Yamabe problem for $J$ that is approximately a bubble centered at $x_k$. Through this, we will prove that $J$ is the required counterexample.

In order to carry out our estimates, we will need to use approximated solutions to the CR Yamabe equation for the Rossi spheres more precise than standard balls, which were defined in \cite{CMY}.
Let $\breve{U}_{x,\lambda,s}$ be these approximated solutions carried to $\H^1$ through the Cayley transform $F$ defined in Subsection \ref{SottosezioneRossi}.
We will omit the dependence by $s$ when unnecessary.
They satisfy the estimates
$$\breve{U}_{x,\lambda,s}(x) \lesssim \frac{\lambda}{(1+\lambda|x|)^2}$$
and for $|x|$ small
$$L_{J_s}\breve{U}_{x,\lambda,s} = \breve{U}_{x,\lambda,s}^3 \left( 2 + O(|x|^3) +  \lambda^{-2} O(|x|^2)  \right)+$$
\begin{equation}\label{StimaBollaApprossimata}
 +  \breve{U}_{x,\lambda,s}^5 \left[ |x|^4 O(4 + \lambda^2 |x|^2)s^2  + O(|x|^5) + O(\lambda^2 |x|^7) \right].
\end{equation}
Furthermore on these approximated solutions, the functional $\ci{J}_{J_s}$ can be estimated precisely on them.

\begin{lemma}\label{EspansioneCMY}
 $$\ci{J}_{J_s}(\breve{U}_{x,\lambda,s}) = 4\pi^2 + 24\pi\frac{s^2}{\lambda^2}(1+o_s(1))+ O\left(\frac{1}{\lambda^3}\right).$$
\end{lemma}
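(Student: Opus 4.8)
The plan is to expand the functional directly from its definition and \eqref{StimaBollaApprossimata}, and then reduce every resulting integral to a universal one by Heisenberg dilation; conceptually this is the energy expansion of \cite{CMY} re-expressed on $\H^1$ through the Cayley transform $F$, which being a CR isomorphism preserves the value of $\ci{J}_{J_s}$. Writing $\breve U = \breve U_{x,\lambda,s}$ and using $\ci{J}_{J_s}(\breve U) = \int_{\H^1}\breve U\,L_{J_s}\breve U - \int_{\H^1}\breve U^4$, I pair $\breve U$ with each term of \eqref{StimaBollaApprossimata} (so that $\breve U\cdot\breve U^3=\breve U^4$ and $\breve U\cdot\breve U^5=\breve U^6$) and collect the factor $2-1=1$ in front of $\int\breve U^4$, obtaining
$$\ci{J}_{J_s}(\breve U) = \int_{\H^1}\breve U^4 + \int_{\H^1}\breve U^4\bigl(O(|x|^3)+\lambda^{-2}O(|x|^2)\bigr) + \int_{\H^1}\breve U^6\bigl[\,O(4+\lambda^2|x|^2)|x|^4 s^2 + O(|x|^5)+O(\lambda^2|x|^7)\,\bigr].$$
Thus it suffices to evaluate the leading term $\int\breve U^4$ and to show that among the weighted corrections exactly the $s^2$-term produces the contribution of order $s^2/\lambda^2$, all others being $O(\lambda^{-3})$.

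For the corrections I would use the substitution $y=\delta_\lambda x$, under which $dV=\lambda^{-4}\,dV_y$ and $|x|=\lambda^{-1}|y|$; combined with the pointwise bound $\breve U\lesssim \lambda(1+\lambda|x|)^{-2}$ this turns $\int_{\H^1}\breve U^6|x|^k$ into $\lambda^{2-k}\int_{\H^1}\frac{|y|^k}{(1+|y|)^{12}}\,dV_y$ and $\int_{\H^1}\breve U^4|x|^k$ into $\lambda^{-k}\int_{\H^1}\frac{|y|^k}{(1+|y|)^{8}}\,dV_y$, the universal integrals being finite for every $k$ that occurs (their integrands are integrable at infinity against the homogeneous volume of dimension $4$). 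Consequently the corrections $O(|x|^3)$ and $\lambda^{-2}O(|x|^2)$ to $\int\breve U^4$ are $O(\lambda^{-3})$ and $O(\lambda^{-4})$ and carry no $s$, while the $O(|x|^5)$ and $O(\lambda^2|x|^7)$ terms give $\lambda^{-3}$ and $\lambda^{2}\cdot\lambda^{-5}=\lambda^{-3}$; only the term $\int_{\H^1}\breve U^6|x|^4 s^2$ is of order $s^2\lambda^{-2}$. The precise constant $24\pi$ then comes from evaluating the associated universal integral — here the $O$ in \eqref{StimaBollaApprossimata} only fixes the order, so I would invoke the refined form of that expansion in \cite{CMY} to pin down the leading coefficient of the $s^2$-term — and for the leading term I would use that $\breve U\to U$ as $s\to 0$ with $\int_{\H^1}U^4=4\pi^2$, so that $\int\breve U^4 = 4\pi^2 + o_s(1)\,s^2\lambda^{-2}+O(\lambda^{-3})$. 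Collecting everything yields $\ci{J}_{J_s}(\breve U)=4\pi^2 + 24\pi\frac{s^2}{\lambda^2}(1+o_s(1))+O(\lambda^{-3})$.

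The main obstacle I anticipate is twofold. First, \eqref{StimaBollaApprossimata} is asserted only for $|x|$ small, whereas $\ci{J}_{J_s}$ integrates over all of $\H^1$; I would therefore split $\H^1$ into a fixed ball, where the expansion applies, and its complement, on which I bound the contribution crudely using the decay $\breve U\lesssim\lambda(1+\lambda|x|)^{-2}$ and the fact that there $J_s$ reduces to $J_0$ up to the transported Rossi deformation, checking that this outer part is absorbed into $O(\lambda^{-3})$. Second, the bookkeeping of the constant is delicate: both the $4s^2|x|^4$ part and the $\lambda^2|x|^2\cdot|x|^4 s^2$ correction in the bracket scale as $s^2\lambda^{-2}$, and $\breve U$ itself depends on $s$, so one must verify that all of these combine into $24\pi\,s^2/\lambda^2$ up to a factor $(1+o_s(1))$ rather than shifting the constant — this is exactly the refined energy computation of \cite{CMY}. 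As a consistency check, the coefficient matches the pseudohermitian mass $m_s=-18\pi s^2+o(s^2)$ through the expected energy--mass relation $\ci{J}_{J_s}(\breve U)\approx 4\pi^2 - \tfrac{4}{3}m_s\lambda^{-2}$.
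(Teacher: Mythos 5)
Your overall skeleton (pair $\breve{U}$ against the pointwise expansion \eqref{StimaBollaApprossimata}, rescale by $\delta_\lambda$, and identify the unique term of order $s^2\lambda^{-2}$) is sensible, and your power counting of the correction terms is correct. But as written the argument does not close, and in the end it collapses onto the same citation the paper itself uses: the paper gives no computation here at all, it simply observes that the expansion follows from the proof of Lemma 7.3 in \cite{CMY}, because there the numerator and the denominator of the CR Sobolev quotient are estimated separately, so the same estimates assemble into an expansion of $\ci{J}_{J_s}$ rather than of the quotient.

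There are two concrete gaps. The first is quantitative: everything in \eqref{StimaBollaApprossimata} is an $O(\cdot)$ bound, so your scheme can only ever show that the correction to $\int\breve{U}^4$ is $O(s^2\lambda^{-2})+O(\lambda^{-3})$; the constant $24\pi$ is invisible to it. Worse, the leading term $\int_{\H^1}\breve{U}_{x,\lambda,s}^4$ is itself $s$-dependent, and your assertion that it equals $4\pi^2+o_s(1)\,s^2\lambda^{-2}+O(\lambda^{-3})$ is essentially as hard as the lemma: a priori it could contribute to the $s^2\lambda^{-2}$ coefficient with a nonzero constant and shift $24\pi$. Both points force you back to the explicit construction and refined expansions of \cite{CMY}, which is where you (correctly) say you would retreat --- but then the partial computation you set up is not doing the work. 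The second gap is the outer region. Outside a fixed ball \eqref{StimaBollaApprossimata} is unavailable, and the ``crude'' bound you propose, using only $\breve{U}\lesssim\lambda(1+\lambda|x|)^{-2}$ and its derivatives, gives $\int_{|x|>1}\breve{U}\,(L_{J_s}\breve{U}-2\breve{U}^3)=O(\lambda^{-2})$ with no power of $s$ attached; an error of that size would swamp the $24\pi s^2\lambda^{-2}$ term you are trying to isolate. Beating it down requires the precise asymptotics at infinity of the approximate solutions (this is exactly the role of Corollary B.2 in \cite{CMY}, and the reason the paper later modifies $\breve{U}_{x,\lambda,s}$ to agree with $U_{x,\lambda}$ far from the concentration point). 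Your closing consistency check against $m_s=-18\pi s^2+o(s^2)$ is a nice sanity check, but it is not a derivation of the constant.
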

This expansion follows from the proof of Lemma 7.3 in \cite{CMY} (the authors there study the CR Yamabe functional and not $\ci{J}_{J_s}$, but they estimate the numerator and the denominator substantially independently).

Let us fix $k$ and consider the approximated solutions $\breve{U}_{x,\lambda,s_k}$ for $x\in B_{r_k}(x_k)$ and $\lambda\gtrsim\frac{1}{r_k}$.
We can change the definition of $\breve{U}_{x,\lambda,s_k}$ so that they coincide with $U_{x,\lambda}$ on $\H^1\setminus B_{Ar_k}(x_k)$, because in the proofs of the results of \cite{CMY}, only the local behavior of the approximated solutions is used, except at the end of the proof of Corollary B.2, when the asymptotics at infinity are also used, but our modification does not change that.
In this way we can also assume that
\begin{equation}\label{Simmetria1}
 \breve{U}_{x,\lambda,0}\equiv U_{x,\lambda}
\end{equation}
and that, using the operator $\iota$ defined in formula (23) in \cite{CMY},
\begin{equation}\label{Simmetria2}
 \iota^*\breve{U}_{x,\lambda,s_k} = \breve{U}_{x,\lambda,-s_k}
\end{equation}
In such a way, reasoning as in Subsection B.3 in \cite{CMY}, the estimate in Lemma \ref{EspansioneCMY} can be improved to
\begin{equation}\label{EspansioneCMY2}
 \ci{J}_{J_s}(\breve{U}_{x,\lambda,s}) = 4\pi^2 + 24\pi\frac{s^2}{\lambda^2}+ O\left(\frac{s^2}{\lambda^3}\right).
\end{equation}

\begin{lemma}\label{StimaDifferenziale}
 For $\lambda$ large it holds that $\N{d\ci{J}_{J_{s_k}}(\breve{U}_{x,\lambda,s_k})}\lesssim\frac{s_k^2}{\lambda^2}$, and if furthermore $\lambda\ge\frac{C}{R_k}$, with $C$ big enough $x\in B_{r_k}(x_k)$ then $\N{d\ci{J}_J(\breve{U}_{x,\lambda,s_k})}\lesssim\frac{s_k}{\lambda^2}$.
\end{lemma}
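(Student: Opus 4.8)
The plan is to reduce the operator norm of the differential to a weighted $L^{\frac43}$ integral of the ``equation residual'' of the approximate bubble, and then to extract the powers of $s_k$ and $\lambda$ by combining the fast decay of that residual with the symmetry in $s$. Since $L_J$ is self-adjoint for the $L^2$ pairing, for every $h\in X$ one has
$$d\ci{J}_J(u)[h] = 2\int_{\H^1}\left(L_Ju - 2u^3\right)h,$$
so that, using the Sobolev embedding $X\hookrightarrow L^4$ and its dual $L^{\frac43}\hookrightarrow X'$,
$$\N{d\ci{J}_J(u)} = 2\N{L_Ju-2u^3}_{X'}\lesssim\N{L_Ju-2u^3}_{L^{\frac43}}.$$
Both estimates then follow once I bound the $L^{\frac43}$ norm of the residual $L_J\breve{U}_{x,\lambda,s_k}-2\breve{U}_{x,\lambda,s_k}^3$, and I write $\breve{U}=\breve{U}_{x,\lambda,s_k}$ in what follows.

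For the first estimate (where $J=J_{s_k}$ on all of $\H^1$) I would insert \eqref{StimaBollaApprossimata}: every term of the residual is a power $\breve{U}^3$ or $\breve{U}^5$ times a polynomial in $|x|$ and $\lambda$, hence it enjoys the fast decay of the bubble rather than that of its second derivatives. A direct computation with the scaling substitution $\rho=\lambda|x|$ then shows that each such term has $L^{\frac43}$ norm $\lesssim s_k^2/\lambda^2$; for instance the term $s^2\lambda^2|x|^6\,\breve{U}^5$ in \eqref{StimaBollaApprossimata} gives exactly this order. The crucial point, exactly as in the passage from Lemma \ref{EspansioneCMY} to \eqref{EspansioneCMY2}, is that a naive term-by-term bound of the summands of \eqref{StimaBollaApprossimata} that do not carry an explicit $s^2$ would only give $O(s_k)$; to see that the residual is genuinely $O(s_k^2)$ I would use \eqref{Simmetria1} and \eqref{Simmetria2}. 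By \eqref{Simmetria1} the residual vanishes identically at $s=0$, since there $\breve{U}_{x,\lambda,0}=U_{x,\lambda}$ solves the equation for $J_0$; by \eqref{Simmetria2} and the fact that $\iota$ intertwines $J_s$ with $J_{-s}$, the norm of the residual is even in $s$, so it is $O(s_k^2)$.

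For the second estimate I would split the residual according to the three regions in which $J$ is defined. On $\H^1\setminus B_{Ar_k}(x_k)$ one has $\breve{U}=U_{x,\lambda}$ and $J=J_0$, so the residual vanishes; on $B_{r_k}(x_k)$ one has $J=J_{s_k}$ and the residual is the Rossi residual already estimated, of order $s_k^2/\lambda^2$. On the annulus $B_{Ar_k}(x_k)\setminus B_{r_k}(x_k)$, where $J=J_f$, I would write
$$L_{J_f}\breve{U}-2\breve{U}^3 = \left(L_{J_{s_k}}\breve{U}-2\breve{U}^3\right) + \left(L_{J_f}-L_{J_{s_k}}\right)\breve{U};$$
the first summand is again the fast-decaying Rossi residual, while the second is a second-order operator whose coefficients are controlled by $|f|+s_k\lesssim s_k$, and it is precisely this \emph{linear} dependence on $s_k$ that produces the power $s_k$ instead of $s_k^2$. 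Estimating $\N{(L_{J_f}-L_{J_{s_k}})\breve{U}}_{L^{\frac43}}$ on the annulus by the scaling computation, and using $\lambda\ge C/R_k$ together with $x\in B_{r_k}(x_k)$ to place the annulus away from the concentration point of the bubble, gives the bound $\lesssim s_k/\lambda^2$; summing the three contributions yields the claim.

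I expect the main obstacle to be twofold. The delicate point in the first estimate is the symmetry argument, because \eqref{StimaBollaApprossimata} alone is not sharp enough: the error terms of the approximate solution that are independent of $s$ would spoil the order $s_k^2$, and one must argue through \eqref{Simmetria1} and \eqref{Simmetria2} that they cancel to that order. The delicate point in the second estimate is the annulus bound for $(L_{J_f}-L_{J_{s_k}})\breve{U}$: unlike the Rossi residual this is an honest second-order operator applied to the bubble, so it decays only like the second derivatives of $\breve{U}$, and one has to track carefully the interplay between the concentration scale $1/\lambda$, the radii $r_k$ and $Ar_k$, and the lower bound $\lambda\ge C/R_k$ in order to recover the factor $\lambda^{-2}$.
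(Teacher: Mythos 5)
Your reduction of $\N{d\ci{J}_J(\breve{U}_{x,\lambda,s_k})}$ to $\N{L_J\breve{U}_{x,\lambda,s_k}-2\breve{U}_{x,\lambda,s_k}^3}_{L^{\frac43}}$ via the dual Sobolev embedding is correct and is the natural starting point; the paper offers no proof beyond a pointer to Corollary B.2 of \cite{CMY}, and your use of \eqref{Simmetria1}--\eqref{Simmetria2} to upgrade the $s$-independent error terms of \eqref{StimaBollaApprossimata} to $O(s_k^2)$ is exactly the mechanism the paper itself invokes in the proof of Lemma \ref{StimaFondo}. Two points, however, are not closed. First, \eqref{StimaBollaApprossimata} is stated only ``for $|x|$ small'', so for the first estimate you must also control the residual where that expansion is unavailable; this is precisely the part of Corollary B.2 where, as the paper remarks, ``the asymptotics at infinity are also used'', and your sketch is silent on it.

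Second, and more seriously, the annulus bound in the second estimate does not follow from the argument you give. The annulus $B_{Ar_k}(x_k)\setminus B_{r_k}(x_k)$ is \emph{not} away from the concentration point: for $x\in B_{r_k}(x_k)$ its distance from $x$ can be zero, and in the relevant regime $\lambda r_k=O(1)$ (recall $\lambda<\beta/r_k$ on $\Omega_k$) the factor $(1+\lambda|y-x|)^{-N}$ gives no gain anywhere on the annulus. A direct computation then yields, for the second-order part of $(L_{J_f}-L_{J_{s_k}})\breve{U}$, a pointwise bound $\lesssim s_k|\nabla^2\breve{U}|\lesssim s_k\lambda^3$ on a set of measure $\sim r_k^4$, hence an $L^{\frac43}$ norm $\lesssim s_k(\lambda r_k)^3\sim s_k$; even placing $x$ at the exact center and letting $\lambda\gg 1/r_k$ only improves this to $s_k/(\lambda r_k)$, never to $s_k/\lambda^2$. (The terms involving $\nabla g$ and $\nabla^2 g$ with $g=f-s_k\phi$ and $|\nabla^j g|\lesssim s_k r_k^{-j}$ are of the same size.) So the factor $\lambda^{-2}$ in the second claim cannot come from a crude $L^{\frac43}$ bound of the annulus term: it must be extracted either from additional structure of the interpolant $f$ and of the cut-off of $\breve{U}$ (e.g.\ cancellations after integrating by parts against the test function), or by treating the annulus together with the Rossi residual as in \cite{CMY} rather than splitting it off. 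As it stands this step --- which you yourself flag as the delicate one --- is a genuine gap, and it is the one on which the chain Lemma \ref{LSapprox} $\Rightarrow$ Lemma \ref{StimaInterna} depends.
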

The proof of the above Proposition is a variant of the proof of Corollary B.2 in \cite{CMY}.

Thanks to the above Proposition, we can perform the contraction argument of Proposition \ref{EsistenzaLS} starting from $\breve{U}_{x,\lambda,s_k}$ instead of $U_{x,\lambda}$, in a ball of radius $\frac{Cs_k^2}{\lambda^2}$.

\begin{lemma}\label{LSapprox}
 For every $x\in B_{r_k}(x_k)$ and $\lambda\ge\frac{C}{R_k}$ there exists $\breve{v}_{x,\lambda,s_k}\in\ci{E}_{x,\lambda}$ with $\N{\breve{v}_{x,\lambda,s_k}}_X\lesssim\frac{s_k^2}{\lambda^2}$ such that
 $$\pi_{\ci{E}_{x,\lambda}}\left(\nabla\ci{J}_J(\breve{U}_{x,\lambda,s_k}+\breve{v}_{x,\lambda,s_k})\right) = 0.$$
\end{lemma}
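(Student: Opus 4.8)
The plan is to re-run the Banach fixed-point argument of Proposition \ref{EsistenzaLS} almost verbatim, but expanding around the approximate solution $\breve{U}_{x,\lambda,s_k}$ instead of the exact bubble $U_{x,\lambda}$. Writing $\breve{U}=\breve{U}_{x,\lambda,s_k}$ and expanding $(\breve{U}+v)^3=\breve{U}^3+3\breve{U}^2v+3\breve{U}v^2+v^3$, the target equation $\pi_{\ci{E}_{x,\lambda}}\Delta_{J_0}^{-1}\bigl(L_J(\breve{U}+v)-2(\breve{U}+v)^3\bigr)=0$ becomes a fixed-point equation $v=\breve{F}(v)$ for
$$\breve{F}(v)= -\left(\left.\pi_{\ci{E}_{x,\lambda}}\Delta_{J_0}^{-1}(L_J-6U_{x,\lambda}^2)\right|_{\ci{E}_{x,\lambda}}\right)^{-1}\pi_{\ci{E}_{x,\lambda}}\Delta_{J_0}^{-1}\left(L_J\breve{U}-2\breve{U}^3-6(\breve{U}^2-U_{x,\lambda}^2)v-6\breve{U}v^2-2v^3\right).$$
Here I deliberately keep the linearisation centred at $U_{x,\lambda}$, so that the operator being inverted is exactly the one whose uniform invertibility is guaranteed by Proposition \ref{DifferenzialeSecondoInvertibile}, and I move the small linear correction $6(\breve{U}^2-U_{x,\lambda}^2)v$ into the right-hand side. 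Since $\breve{U}_{x,\lambda,0}\equiv U_{x,\lambda}$ by \eqref{Simmetria1} and the dependence on $s$ is smooth, one has $\N{\breve{U}^2-U_{x,\lambda}^2}_{L^2}\lesssim s_k$ (the $L^4$-norms involved being scale invariant), so by Hölder the multiplication operator $v\mapsto 6(\breve{U}^2-U_{x,\lambda}^2)v$ has $X\to X'$ norm $O(s_k)$ and contributes only a small Lipschitz constant.

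Next I would bound the inhomogeneous (constant in $v$) term. Denoting by $A^{-1}$ the inverse operator appearing in $\breve{F}$, which is bounded uniformly in $x,\lambda,J$ by Proposition \ref{DifferenzialeSecondoInvertibile}, the $X$-norm of the constant term is controlled by $\N{\pi_{\ci{E}_{x,\lambda}}\Delta_{J_0}^{-1}(L_J\breve{U}-2\breve{U}^3)}_X$, which in turn is (up to a fixed constant, by Riesz representation) bounded by $\N{d\ci{J}_J(\breve{U}_{x,\lambda,s_k})}$. Lemma \ref{StimaDifferenziale} gives precisely that, in the range $x\in B_{r_k}(x_k)$ and $\lambda\ge\frac{C}{R_k}$, this quantity is $\lesssim\frac{s_k^2}{\lambda^2}$; this is what fixes the radius $\rho\sim\frac{s_k^2}{\lambda^2}$ of the ball on which the fixed point is to be found.

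Finally I would estimate the two genuinely nonlinear terms exactly as in the proof of Proposition \ref{EsistenzaLS}: using that the adjoint of the embedding $X\hookrightarrow L^4$ is $L^{\frac{4}{3}}\hookrightarrow X'$, together with Hölder and the Sobolev inequality, one gets $\N{\pi_{\ci{E}_{x,\lambda}}\Delta_{J_0}^{-1}(6\breve{U}v^2+2v^3)}_X\lesssim\N{v}_X^2+\N{v}_X^3$ and, for the difference, $\N{\breve{F}(v)-\breve{F}(w)}_X\lesssim(s_k+\rho)\N{v-w}_X$. Choosing $s_k$ small and $\lambda$ large (so that $\rho$ is small), both the self-map property $\breve{F}(\overline{B_\rho})\subset\overline{B_\rho}$ and the contraction property hold on the closed ball of radius $\rho=\frac{Cs_k^2}{\lambda^2}$ in $\ci{E}_{x,\lambda}$; the Banach fixed-point theorem then yields a unique $\breve{v}_{x,\lambda,s_k}\in\ci{E}_{x,\lambda}$ with $\N{\breve{v}_{x,\lambda,s_k}}_X\le\rho\lesssim\frac{s_k^2}{\lambda^2}$ solving the projected equation.

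The only genuinely new ingredient compared with Proposition \ref{EsistenzaLS} is that the inhomogeneous term is no longer merely $O(\alfa)$ but must be shown to be as small as $\frac{s_k^2}{\lambda^2}$; this quantitative smallness, which is exactly what forces the fixed point into a ball shrinking like $\frac{s_k^2}{\lambda^2}$, is supplied entirely by Lemma \ref{StimaDifferenziale} (and ultimately by the cancellations encoded in the symmetries \eqref{Simmetria1} and \eqref{Simmetria2}). The subtlety I expect to require the most care is keeping the invertibility of the linearised operator uniform as the centre of expansion moves from $U_{x,\lambda}$ to $\breve{U}_{x,\lambda,s_k}$; for this reason I would linearise at $U_{x,\lambda}$ and treat the $O(s_k)$ correction $6(\breve{U}^2-U_{x,\lambda}^2)v$ perturbatively, rather than attempt to re-prove Proposition \ref{DifferenzialeSecondoInvertibile} at the approximate solution.
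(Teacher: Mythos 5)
Your proposal is correct and follows the same route as the paper: the paper's entire proof of this lemma is the one-line remark that the contraction argument of Proposition \ref{EsistenzaLS} can be repeated starting from $\breve{U}_{x,\lambda,s_k}$ in a ball of radius $\frac{Cs_k^2}{\lambda^2}$, and your write-up supplies exactly the details that remark presupposes (keeping the linearisation at $U_{x,\lambda}$ so that Proposition \ref{DifferenzialeSecondoInvertibile} applies verbatim, treating $6(\breve{U}^2-U_{x,\lambda}^2)v$ perturbatively, and using Lemma \ref{StimaDifferenziale} to shrink the inhomogeneous term). The only point worth flagging is that Lemma \ref{StimaDifferenziale} as stated bounds $\N{d\ci{J}_J(\breve{U}_{x,\lambda,s_k})}$ only by $\frac{s_k}{\lambda^2}$ (the $\frac{s_k^2}{\lambda^2}$ bound being stated for $d\ci{J}_{J_{s_k}}$), so the radius $\frac{s_k^2}{\lambda^2}$ requires the stronger of the two bounds; but this discrepancy is already present in the paper's own statements and is not introduced by your argument.
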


Notice that, thanks to the uniqueness statement in the contraction theorem, $U_{x,\lambda}+v_{x,\lambda,s_k}= \breve{U}_{x,\lambda,s_k}+\breve{v}_{x,\lambda,s_k}$.

\begin{lemma}\label{StimaInterna}
 $$\ci{J}_{J}(U_{x,\lambda}+v_{x,\lambda,s_k}) = 4\pi^2 + 24\pi\frac{s^2}{\lambda^2}+ O\left(\frac{s_k^2}{\lambda^3}\right).$$
\end{lemma}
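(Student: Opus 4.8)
The starting point is the identity $U_{x,\lambda}+v_{x,\lambda,s_k}=\breve U_{x,\lambda,s_k}+\breve v_{x,\lambda,s_k}$ recorded after Lemma~\ref{LSapprox}; writing $\breve U=\breve U_{x,\lambda,s_k}$ and $\breve v=\breve v_{x,\lambda,s_k}$, it is enough to expand $\ci{J}_J$ at $\breve U$ in the direction $\breve v$. Since $\ci{J}_J(u)=\int_{\H^1}uL_Ju-\int_{\H^1}u^4$ is a polynomial of degree four in $u$, the Taylor expansion at $\breve U$ terminates and is exact,
$$\ci{J}_J(\breve U+\breve v)=\ci{J}_J(\breve U)+d\ci{J}_J(\breve U)[\breve v]+\tfrac12 d^2\ci{J}_J(\breve U)[\breve v,\breve v]-4\int_{\H^1}\breve U\breve v^3-\int_{\H^1}\breve v^4,$$
so the whole task splits into controlling the four $\breve v$-dependent terms and then evaluating $\ci{J}_J(\breve U)$.

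For the correction terms I would use only the two a priori bounds already at hand: $\N{\breve v}_X\lesssim s_k^2/\lambda^2$ from Lemma~\ref{LSapprox} and $\N{d\ci{J}_J(\breve U)}\lesssim s_k/\lambda^2$ from the second half of Lemma~\ref{StimaDifferenziale}. The linear term is then $\lesssim\N{d\ci{J}_J(\breve U)}\N{\breve v}_X\lesssim s_k^3/\lambda^4$, which is $o(s_k^2/\lambda^3)$ because $s_k/\lambda\to0$; the quadratic term is $\lesssim\N{d^2\ci{J}_J(\breve U)}\N{\breve v}_X^2\lesssim s_k^4/\lambda^4$ by the uniform bound of Proposition~\ref{DifferenzialeSecondoInvertibile}; and the cubic and quartic terms are $\lesssim\N{\breve U}_{L^4}\N{\breve v}_{L^4}^3$ and $\N{\breve v}_{L^4}^4$, hence $\lesssim\N{\breve v}_X^3+\N{\breve v}_X^4$ by the Sobolev inequality, which is far smaller. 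Thus $\ci{J}_J(\breve U+\breve v)=\ci{J}_J(\breve U)+O(s_k^2/\lambda^3)$, and it remains to compute $\ci{J}_J(\breve U)$ to the same precision.

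To evaluate $\ci{J}_J(\breve U)$ I would compare it with the global Rossi functional $\ci{J}_{J_{s_k}}(\breve U)$, for which \eqref{EspansioneCMY2} gives precisely $4\pi^2+24\pi s_k^2/\lambda^2+O(s_k^2/\lambda^3)$. The two functionals coincide on $B_{r_k}(x_k)$, where $J=J_{s_k}$, so their difference is
$$\ci{J}_J(\breve U)-\ci{J}_{J_{s_k}}(\breve U)=\int_{\H^1\setminus B_{r_k}(x_k)}\breve U\,(L_J-L_{J_{s_k}})\breve U,$$
an integral supported on the transition annulus $B_{Ar_k}(x_k)\setminus B_{r_k}(x_k)$ and on the exterior. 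On the exterior $J=J_0$ and $\breve U=U_{x,\lambda}$, so there $\ci{J}_J$ is the flat functional and the corresponding contribution is exactly the far-field Rossi tail already accounted for in \eqref{EspansioneCMY2}; moreover the decay $\breve U\lesssim\lambda(1+\lambda|\cdot|)^{-2}$ concentrates all the relevant integrals at scale $|\cdot|\sim 1/\lambda$ around $x_k$, so the cutoff at $B_{Ar_k}(x_k)$ only perturbs the far tail. The matter therefore reduces to showing that the difference above is $O(s_k^2/\lambda^3)$.

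This last estimate is the main obstacle. On the annulus $J=J_f$ with $|f|\le s_k$, so $(L_J-L_{J_{s_k}})\breve U$ carries a part that is merely linear in $s_k$, coming from $(L_{J_f}-L_{J_{s_k}})\breve U$; bounding it crudely by $\N{J-J_{s_k}}_{\Gamma^2}\lesssim s_k$ against the above decay yields only $O(s_k/\lambda^2)$, which is too weak. The key is that this linear contribution cancels once it is integrated against the rotation-invariant profile $\breve U$: the Rossi deformation enters through the factor $\phi=\big((t-i(1+|z|^2))/(t+i(1+|z|^2))\big)^3$, which carries a nonzero winding, so its angular average against the invariant density $\breve U^2$ vanishes. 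I would make this rigorous by choosing $f$ compatibly with the involution $\iota$ and invoking the symmetry \eqref{Simmetria2}, which forces $\ci{J}_J(\breve U)$ to be even in $s_k$; the odd terms then drop, the annulus contributes only at order $s_k^2$, and the decay of $\breve U$ and of its first and second derivatives on $B_{Ar_k}(x_k)\setminus B_{r_k}(x_k)$ upgrades the surviving contribution to $O(s_k^2/\lambda^3)$. This is the exact analogue, for the localized structure $J$, of the cancellation performed in Subsection~B.3 of \cite{CMY}, and is the step demanding the most care.
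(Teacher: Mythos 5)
Your proposal is correct and follows essentially the same route as the paper: both pass through the identity $U_{x,\lambda}+v_{x,\lambda,s_k}=\breve U_{x,\lambda,s_k}+\breve v_{x,\lambda,s_k}$, absorb the $\breve v$-dependent terms using Lemmas~\ref{StimaDifferenziale} and~\ref{LSapprox}, and reduce the statement to \eqref{EspansioneCMY2} plus an estimate of $\int_{\H^1\setminus B_{r_k}(x_k)}\breve U\,(L_J-L_{J_{s_k}})\breve U$. Your treatment of that last integral is in fact more explicit than the paper's one-line assertion that it is $O(s_k^2/\lambda^4)$: you correctly identify that the naive bound on the transition annulus is only linear in $s_k$, and that the $\iota$-symmetry \eqref{Simmetria2} (with $f$ chosen compatibly) is what upgrades it to order $s_k^2$ --- which is precisely the mechanism the paper relies on implicitly through its symmetrized estimates.
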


\begin{proof}
 Thanks to Lemmas \ref{StimaDifferenziale} and \ref{LSapprox}
 $$\ci{J}_{J}(U_{x,\lambda}+v_{x,\lambda,s_k}) - \ci{J}_{J_{s_k}}(\breve{U}_{x,\lambda,s_k}) = \ci{J}_{J}(\breve{U}_{x,\lambda,s_k}+\breve{v}_{x,\lambda,s_k}) -\ci{J}_{J_{s_k}}(\breve{U}_{x,\lambda,s_k}) =$$
 $$=\ci{J}_{J}(\breve{U}_{x,\lambda,s_k}) - \ci{J}_{J_{s_k}}(\breve{U}_{x,\lambda,s_k}) +O\left(\frac{s_k^4}{\lambda^6}\right) =$$
 $$= \int_{\H^1\setminus B_{r_k}(x_k)}\breve{U}_{x,\lambda,s_k}L_{s_k}\breve{U}_{x,\lambda,s_k}-\breve{U}_{x,\lambda,s_k}^4 +O\left(\frac{s_k^2}{\lambda^4}\right) = O\left(\frac{s_k^2}{\lambda^4}\right).$$
 The thesis follows thanks to \eqref{EspansioneCMY2}.
\end{proof}

\begin{lemma}\label{StimaFondo}
 If $|x-x_k|\le R_k$ and $\lambda = \frac{\alfa}{R_k}$, then
 $$\ci{J}_J(U_{x,\lambda}+v_{x,\lambda,s_k}) = 4\pi^2 + O\left(\frac{s_k^2r_k^4}{R_k} + \frac{s_k^2r_k^6}{R_k^2}+s_k^4R_k^6\right).$$
\end{lemma}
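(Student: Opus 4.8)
The plan is to reduce, through Lyapunov--Schmidt, to the energy of the bare bubble $U_{x,\lambda}$ and then to estimate that energy directly, exploiting that at the bottom scale $\lambda=\alfa/R_k$ the bubble is much wider than the region on which $J$ is deformed. Since $v_{x,\lambda,s_k}\in\ci{E}_{x,\lambda}$ and $\pi_{\ci{E}_{x,\lambda}}\nabla\ci{J}_J(U_{x,\lambda}+v_{x,\lambda,s_k})=0$, the contraction of Proposition \ref{EsistenzaLS} gives $\N{v_{x,\lambda,s_k}}\lesssim\N{\pi_{\ci{E}_{x,\lambda}}\nabla\ci{J}_J(U_{x,\lambda})}$, and a Taylor expansion together with the uniform bounds of Proposition \ref{DifferenzialeSecondoInvertibile} yields
$$\ci{J}_J(U_{x,\lambda}+v_{x,\lambda,s_k})=\ci{J}_J(U_{x,\lambda})+O\left(\N{\pi_{\ci{E}_{x,\lambda}}\nabla\ci{J}_J(U_{x,\lambda})}^2\right),$$
the first-order term being $O(\N{\cdot}^2)$ because $v_{x,\lambda,s_k}\in\ci{E}_{x,\lambda}$. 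The quadratic remainder is exactly the Lyapunov--Schmidt error treated in the proof of Lemma \ref{StimaInterna}, and at $\lambda=\alfa/R_k$ it contributes the term $O(s_k^4R_k^6)=O(s_k^4/\lambda^6)$.

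It then remains to expand $\ci{J}_J(U_{x,\lambda})$. Since $U_{x,\lambda}$ solves the unperturbed equation, $\ci{J}_{J_0}(U_{x,\lambda})=4\pi^2$ (by \eqref{Simmetria1}), so, $L_J-L_{J_0}$ being supported where $J\ne J_0$,
$$\ci{J}_J(U_{x,\lambda})-4\pi^2=\int_{B_{Ar_k}(x_k)}U_{x,\lambda}\,(L_J-L_{J_0})U_{x,\lambda}.$$
The same integrand governs the gradient: as in the proof of Proposition \ref{EsistenzaLS}, duality between $X\hookrightarrow L^4$ and $L^{4/3}\hookrightarrow X'$ gives $\N{\pi_{\ci{E}_{x,\lambda}}\nabla\ci{J}_J(U_{x,\lambda})}\lesssim\N{(L_J-L_{J_0})U_{x,\lambda}}_{L^{4/3}(B_{Ar_k}(x_k))}$. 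Thus everything reduces to understanding $(L_J-L_{J_0})U_{x,\lambda}=-4(\Delta_J-\Delta_{J_0})U_{x,\lambda}+(R_J-R_{J_0})U_{x,\lambda}$ on the small ball $B_{Ar_k}(x_k)$, for which Propositions \ref{StimaCurvatura} and \ref{DifferenzaSublaplaciani} are the available tools.

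The decisive geometric fact is that at $\lambda=\alfa/R_k$ the bubble is nearly constant on $B_{Ar_k}(x_k)$: indeed $Ar_k\ll R_k\sim\lambda^{-1}$ and $\lambda|x-x_k|\lesssim\alfa$, so writing $c=U_{x,\lambda}(x_k)\sim\lambda$ one has $U_{x,\lambda}=c+\rho$ on $B_{Ar_k}(x_k)$ with first variation $|\rho|\lesssim\lambda^2r_k$ and a corresponding bound on the second variation. I would split $B_{Ar_k}(x_k)=B_{r_k}(x_k)\cup(B_{Ar_k}\setminus B_{r_k})$, using on $B_{r_k}(x_k)$ (where $J=J_{s_k}$) the fine estimate \eqref{StimaBollaApprossimata} for $\breve{U}_{x,\lambda,s_k}$ evaluated at a center with $\lambda|x-x_k|\lesssim\alfa$, and on the annulus (where $\lvert f\rvert\le s_k$) only the pointwise bounds on the curvature and sublaplacian differences. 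The contributions linear in $s_k$ must cancel, which is the content of the symmetry \eqref{Simmetria2}, $\iota^*\breve{U}_{x,\lambda,s_k}=\breve{U}_{x,\lambda,-s_k}$ (morally, the linearization of the Webster curvature at the flat structure is a divergence). What survives is quadratic in $s_k$, and inserting $c\sim\lambda$, $|\rho|\lesssim\lambda^2r_k$ and the $s^2$-coefficients of \eqref{StimaBollaApprossimata}, then substituting $\lambda=\alfa/R_k$, is designed to reproduce the two terms $s_k^2r_k^4/R_k$ and $s_k^2r_k^6/R_k^2$.

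I expect the main obstacle to be precisely this extraction of the rates. Treating $U_{x,\lambda}$ as exactly equal to the constant $c$ gives only $c^2\int_{B_{Ar_k}}(R_J-R_{J_0})\sim s_k^2r_k^2/R_k^2$, which is \emph{larger} than every term allowed by the statement; hence the heart of the proof is to show that this constant‑mode contribution is subordinate, i.e.\ that $\int_{\H^1}(R_J-R_{J_0})\,\theta\wedge d\theta$ vanishes to the relevant order for the compactly supported deformation (beyond the parity cancellation of \eqref{Simmetria2}), so that the genuine leading terms are those produced by the first and second variations $\rho$ of the wide bubble weighted by the quadratic part of the deformation. Keeping the annulus within the same budget, using only $\lvert f\rvert\le s_k$ and the near-constancy of $U_{x,\lambda}$, is a second delicate point, and it is here that the eventual choice of $\alfa$ and the smallness of $r_k$ and $R_k$ enter.
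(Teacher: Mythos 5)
Your reduction to the bare bubble $U_{x,\lambda}$ cannot reach the stated precision, and this is a genuine gap rather than a missing computation. At the bottom scale $\lambda=\alfa/R_k$ with $r_k\sim R_k$, the residual $(L_J-L_{J_0})U_{x,\lambda}$ is pointwise of size $s_k(\lambda^3+\lambda^2r_k^{-1}+\lambda r_k^{-2})\sim s_kR_k^{-3}$ on a set of volume $\sim r_k^4$ (homogeneous dimension $4$), so $\N{\pi_{\ci{E}_{x,\lambda}}\nabla\ci{J}_J(U_{x,\lambda})}\lesssim\N{(L_J-L_{J_0})U_{x,\lambda}}_{L^{4/3}}\sim s_k$ with \emph{no} gain in $r_k,R_k$; your quadratic remainder is therefore only $O(s_k^2)$, whereas the lemma requires $O(s_k^2r_k^3)$. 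The identification of this remainder with the $O(s_k^4R_k^6)$ error of Lemma \ref{StimaInterna} is incorrect: that bound rests on $\N{\breve{v}_{x,\lambda,s_k}}\lesssim s_k^2/\lambda^2$, which is a bound on the correction to $\breve{U}_{x,\lambda,s_k}$, not on $v_{x,\lambda,s_k}$ (these differ even though $U_{x,\lambda}+v_{x,\lambda,s_k}=\breve{U}_{x,\lambda,s_k}+\breve{v}_{x,\lambda,s_k}$). The correction $v_{x,\lambda,s_k}$ is itself of size $s_k$ and shifts the energy at order $s_k^2$, so it cannot be relegated to an error term; correspondingly $\ci{J}_J(U_{x,\lambda})$ differs from the left-hand side at order $s_k^2$ and your main term must be wrong at that order no matter how it is computed. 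You do correctly flag that the constant-mode contribution $c^2\int(R_J-R_{J_0})$ is too large, but the cancellation you then postulate does not occur: by Proposition \ref{StimaCurvatura} the quadratic-in-$f$ part of $R_J-R_{J_0}$ contains the sign-definite terms $-|Zf|^2-3|\con{Z}f|^2$, whose integral is genuinely of order $s_k^2$ and is only compensated by the first-order-in-$s$ correction to the bubble, which your base point $U_{x,\lambda}$ does not contain.

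The paper avoids both problems by never expanding around $U_{x,\lambda}$: using the uniqueness of the fixed point to write $\ci{J}_J(U_{x,\lambda}+v_{x,\lambda,s_k})=\ci{J}_J(\breve{U}_{x,\lambda,s_k}+\breve{v}_{x,\lambda,s_k})$, it expands around the CMY approximate solutions, which absorb the first-order correction by construction. After the symmetry upgrade via \eqref{Simmetria1}--\eqref{Simmetria2}, the residual $L_J\breve{U}_{x,\lambda,s_k}-2\breve{U}_{x,\lambda,s_k}^3$ carries an overall factor $s_k^2$ together with positive powers of $|x|$, so the main term $\int_{B_{Ar_k}(x_k)}\breve{U}(L_J\breve{U}-2\breve{U}^3)$ gives exactly $s_k^2O(r_k^4/R_k+r_k^6/R_k^2)$, while $\N{\breve{v}}\lesssim s_k^2/\lambda^2$ makes the Lyapunov--Schmidt error $O(s_k^4R_k^6)$. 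Salvaging your route would require proving the cancellations you postulate, which in effect amounts to reconstructing $\breve{U}_{x,\lambda,s_k}$ by hand.
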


\begin{proof}
 Thanks to \eqref{Simmetria1} and \eqref{Simmetria2}, the estimate \eqref{StimaBollaApprossimata} can be improved to
 $$L_{J_s}\breve{U}_{x,\lambda,s_k} - 2\breve{U}_{x,\lambda,s_k}^3 =$$
\begin{equation}
 = \breve{U}_{x,\lambda,s_k}^3 \left( s_k^2O(|x|^3) +  s_k^2\lambda^{-2} O(|x|^2)  \right)
+  \breve{U}_{x,\lambda,s_k}^5 \left[ |x|^4 O(4 + \lambda^2 |x|^2)s_k^2  + O(s_k^2|x|^5) + O(s_k^2\lambda^2 |x|^7) \right].
\end{equation}
 Therefore 
 $$\ci{J}_J(U_{x,\lambda}+v_{x,\lambda,s_k}) = \ci{J}_{J}(\breve{U}_{x,\lambda,s_k}+\breve{v}_{x,\lambda,s_k}) = \ci{J}_{J}(\breve{U}_{x,\lambda,s_k}) +O\left(\frac{s_k^4}{\lambda^6}\right) =$$
 $$= \int_{\H^1}U_{x,\lambda}^4 + \int_{B_{Ar_k}(x_k)}\breve{U}_{x,\lambda,s_k}L_J\breve{U}_{x,\lambda,s_k}-2\breve{U}_{x,\lambda,s_k}^4 +O\left(\frac{s_k^4}{\lambda^6}\right) =$$
 $$= 4\pi^2 + s_k^2\int_{B_{Ar_k}(x_k)}\left(\breve{U}_{x,\lambda,s_k}^4 \left( O(|x|^3) +  \lambda^{-2} O(|x|^2)  \right)+  \breve{U}_{x,\lambda,s_k}^6 \left[ |x|^4 O(4 + \lambda^2 |x|^2)  + O(|x|^5) + O(\lambda^2 |x|^7) \right]\right) +O\left(\frac{s_k^4}{\lambda^6}\right) =$$
 $$= 4\pi^2 + s_k^2\left(\frac{\alfa}{R_k}\right)^4O\left(\int_{B_{\alfa Ar_k/R_k}(x_k)}\left(\frac{R_k}{\alfa}\right)^4\left[\left(\frac{R_k}{\alfa}\right)^3|x|^3 + \lambda^{-2}\left(\frac{R_k}{\alfa}\right)^2|x|^2\right]\right) +$$
 $$+ s_k^2\left(\frac{\alfa}{R_k}\right)^4O\left(\int_{B_{\alfa Ar_k/R_k}(x_k)}\left(\frac{R_k}{\alfa}\right)^6\left[\left(\frac{R_k}{\alfa}\right)^4|x|^4 + \left(\frac{R_k}{\alfa}\right)^5|x|^5\right]\right) +O\left(\frac{s_k^4}{\lambda^6}\right) =$$
 $$= 4\pi^2 + s_k^2O\left(\frac{r_k^4}{R_k} + \frac{r_k^6}{R_k^2}\right) +O\left(s_k^4R_k^6\right).$$
\end{proof}

\begin{lemma}\label{StimaLaterale}
 If $|x-x_k|=R_k$ and $\frac{\alfa}{R_k}\le\lambda\le\frac{\beta}{r_k}$ then
 $$\ci{J}_J(U_{x,\lambda}+v_{x,\lambda,s_k}) = 4\pi^2 +O\left(\lambda^{-4}r_k^{-1}s_k^2 +\lambda^{-6}r_k^{-4}s_k^2\right) +O\left(\frac{s_k^4}{\lambda^6}\right).$$
\end{lemma}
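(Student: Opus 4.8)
The plan is to follow the template of the proof of Lemma \ref{StimaFondo}, adapting it to the lateral regime in which the bubble is centered at distance exactly $R_k$ from the defect while $\lambda$ ranges over the whole interval $[\alfa/R_k,\beta/r_k]$. First I would invoke the uniqueness statement in Lemma \ref{LSapprox} to replace $U_{x,\lambda}+v_{x,\lambda,s_k}$ by $\breve{U}_{x,\lambda,s_k}+\breve{v}_{x,\lambda,s_k}$, and then use that $\breve{v}_{x,\lambda,s_k}$ solves the projected equation to discard it at the cost of a term $O(s_k^4/\lambda^6)$, exactly as in Lemmas \ref{StimaInterna} and \ref{StimaFondo}. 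This reduces the problem to estimating $\ci{J}_J(\breve{U}_{x,\lambda,s_k})$.

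Next I would decompose the functional using that $\breve{U}_{x,\lambda,s_k}$ coincides with $U_{x,\lambda}$ and $J$ with $J_0$ outside $B_{Ar_k}(x_k)$, so that there $\breve{U}L_J\breve{U}-2\breve{U}^4=U_{x,\lambda}L_{J_0}U_{x,\lambda}-2U_{x,\lambda}^4=0$. Writing $\breve{U}L_J\breve{U}-\breve{U}^4=(\breve{U}L_J\breve{U}-2\breve{U}^4)+\breve{U}^4$ and recalling $\int_{\H^1}U_{x,\lambda}^4=\ci{J}_{J_0}(U_{x,\lambda})=4\pi^2$, this yields
$$\ci{J}_J(\breve{U}_{x,\lambda,s_k})=4\pi^2+\int_{B_{Ar_k}(x_k)}\left(\breve{U}_{x,\lambda,s_k}L_J\breve{U}_{x,\lambda,s_k}-2\breve{U}_{x,\lambda,s_k}^4\right)+O\!\left(\frac{s_k^4}{\lambda^6}\right),$$
the lower order differences coming from $\breve{U}^4-U_{x,\lambda}^4$ being absorbed thanks to \eqref{Simmetria1} and \eqref{Simmetria2}.

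The heart of the proof is the estimate of the local integral. I would insert the improved pointwise bound for $L_J\breve{U}-2\breve{U}^3$ (the version carrying an extra factor $s_k^2$ on every term, obtained from the symmetries \eqref{Simmetria1} and \eqref{Simmetria2} as in Lemma \ref{StimaFondo}), multiply by $\breve{U}$, and integrate over $B_{Ar_k}(x_k)$. The decisive difference from Lemma \ref{StimaFondo} is geometric: since $|x-x_k|=R_k\gg Ar_k$, the ball $B_{Ar_k}(x_k)$ lies in the \emph{tail} of the bubble, so on it $\breve{U}_{x,\lambda,s_k}\sim\lambda(1+\lambda R_k)^{-2}$ is essentially constant and $|y-x_k|\lesssim r_k$. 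Plugging these sizes into the error terms $\breve{U}^4(O(|y-x_k|^3)+\lambda^{-2}O(|y-x_k|^2))$ and $\breve{U}^6(O(|y-x_k|^4(4+\lambda^2|y-x_k|^2))+\dots)$, using $\int_{B_{Ar_k}(x_k)}|y-x_k|^p\lesssim r_k^{4+p}$, and bounding the resulting powers of $r_k/R_k$ crudely, one arrives at the stated $O(\lambda^{-4}r_k^{-1}s_k^2+\lambda^{-6}r_k^{-4}s_k^2)$. Note that because the bubble is now concentrated \emph{outside} the Rossi ball $B_{r_k}(x_k)$, the leading term $24\pi s^2/\lambda^2$ of \eqref{EspansioneCMY2} is absent, which is exactly why the lateral bound is so much smaller than the interior one of Lemma \ref{StimaInterna}.

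The main obstacle I expect is the bookkeeping of this local integral. One must keep $\lambda$ explicit and make the estimate uniform over the whole range $[\alfa/R_k,\beta/r_k]$, the bubble size $\lambda(1+\lambda R_k)^{-2}$ behaving differently near the two endpoints; and one must treat the cutoff annulus $B_{Ar_k}(x_k)\setminus B_{r_k}(x_k)$, where $J=J_f$ differs from $J_{s_k}$, so that the improved pointwise estimate still controls $L_J\breve{U}-2\breve{U}^3$ there. The symmetries \eqref{Simmetria1} and \eqref{Simmetria2} are precisely what guarantee that the terms linear in $s_k$ cancel and only the $s_k^2$ contributions survive. Once the local integral is bounded, combining it with the reduction of the first two steps gives the claim.
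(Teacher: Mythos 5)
Your proposal follows essentially the same route as the paper's proof: the same reduction from $U_{x,\lambda}+v_{x,\lambda,s_k}$ to $\breve{U}_{x,\lambda,s_k}$ via Lemma \ref{LSapprox} with an $O(s_k^4/\lambda^6)$ error, the same decomposition of $\ci{J}_J(\breve{U}_{x,\lambda,s_k})$ into $\int_{\H^1}U_{x,\lambda}^4=4\pi^2$ plus a local integral over $B_{Ar_k}(x_k)$ of the improved (extra $s_k^2$) pointwise bound, and the same key geometric input that $B_{Ar_k}(x_k)$ sits in the tail of the bubble since $|x-x_k|=R_k$. Your observation that the $24\pi s^2/\lambda^2$ term disappears for exactly this reason is consistent with the paper's computation.
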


\begin{proof}
 Proceeding as in the proof of Lemma \ref{StimaFondo},
 $$\ci{J}_J(U_{x,\lambda}+v_{x,\lambda,s_k}) = \ci{J}_{J}(\breve{U}_{x,\lambda,s_k}+\breve{v}_{x,\lambda,s_k}) = \ci{J}_{J}(\breve{U}_{x,\lambda,s_k}) +O\left(\frac{s_k^4}{\lambda^6}\right) =$$
 $$= \int_{\H^1}U_{x,\lambda}^4 + \int_{B_{Ar_k}(x_k)}\breve{U}_{x,\lambda,s_k}L_J\breve{U}_{x,\lambda,s_k}-2\breve{U}_{x,\lambda,s_k}^4 +O\left(\frac{s_k^4}{\lambda^6}\right) =$$
 $$= 4\pi^2 +O\left(s_k^2\lambda^{-4}\int_{x+\delta_{\lambda}(B_{Ar_k}(x_k)-x)}\left[\lambda^4|x|^{-8}(\lambda^{-3}|x|^3 + \lambda^{-4}|x|^{2})+\right.\right.$$
 $$\left.\left. + \lambda^6|x|^{-12}(\lambda^{-4}|x|^4(1+|x|^2)+ \lambda^{-5}|x|^5 + \lambda^{-5}|x|^7) \right]\right) +O\left(\frac{s_k^4}{\lambda^6}\right)=$$
 $$= 4\pi^2 +O\left(s_k^2\int_{x+\delta_{\lambda}(B_{Ar_k}(x_k)-x)}\left[\lambda^{-3}|x|^{-5} + \lambda^{-4}|x|^{-6}+\lambda^{-2}|x|^{-8}\right]\right) +O\left(\frac{s_k^4}{\lambda^6}\right)=$$
 $$= 4\pi^2 +O\left(s_k^2(\lambda r_k)^4 \left[\lambda^{-3}|\lambda r_k|^{-5} + \lambda^{-4}|\lambda r_k|^{-6}+\lambda^{-2}|\lambda r_k|^{-8}\right]\right) +O\left(\frac{s_k^4}{\lambda^6}\right)=$$
 $$= 4\pi^2 +O\left(\lambda^{-4}r_k^{-1}s_k^2 +\lambda^{-6}r_k^{-4}s_k^2\right) +O\left(\frac{s_k^4}{\lambda^6}\right).$$
\end{proof}

In a similar way, the following estimate can be deduced

\begin{lemma}\label{StimaCima}
 If $|x-x_k|\le R_k$ and $\lambda = \frac{\beta}{r_k}$, then
 $$\ci{J}_J(U_{x,\lambda}+v_{x,\lambda,s_k}) = 4\pi^2 + O(s_k^2r_k^2) +O\left(\frac{s_k^4}{\lambda^6}\right).$$
\end{lemma}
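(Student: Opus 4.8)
The plan is to follow the scheme of the proofs of Lemmas~\ref{StimaFondo} and \ref{StimaLaterale}, specializing the computation to the regime $\lambda=\frac{\beta}{r_k}$, in which the concentration scale $\lambda^{-1}\sim r_k$ of the bubble is comparable to the radius $Ar_k$ of the region where $J\neq J_0$. Throughout, $|y|$ denotes the Korányi norm of the integration point measured from the bubble centre $x$.

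First I would reduce the problem to an estimate on the approximate bubble alone. Exactly as in the proof of Lemma~\ref{StimaFondo}, using Lemmas~\ref{StimaDifferenziale} and \ref{LSapprox} together with the identity $U_{x,\lambda}+v_{x,\lambda,s_k}=\breve U_{x,\lambda,s_k}+\breve v_{x,\lambda,s_k}$, one gets
\[
\ci{J}_J(U_{x,\lambda}+v_{x,\lambda,s_k})=\ci{J}_J(\breve U_{x,\lambda,s_k})+O\!\left(\frac{s_k^4}{\lambda^6}\right).
\]
Then, recalling that $\breve U_{x,\lambda,s_k}=U_{x,\lambda}$ and $J=J_0$ outside $B_{Ar_k}(x_k)$ and that $\int_{\H^1}U_{x,\lambda}^4=4\pi^2$, I would split off the bubble energy and write the remainder as a local integral,
\[
\ci{J}_J(\breve U_{x,\lambda,s_k})=4\pi^2+\int_{B_{Ar_k}(x_k)}\breve U_{x,\lambda,s_k}\bigl(L_J\breve U_{x,\lambda,s_k}-2\breve U_{x,\lambda,s_k}^3\bigr)+O\!\left(\frac{s_k^4}{\lambda^6}\right),
\]
inserting the improved pointwise estimate from the proof of Lemma~\ref{StimaFondo}, which is licit thanks to the symmetries \eqref{Simmetria1} and \eqref{Simmetria2}.

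The core of the proof is then the estimate of that local integral for $\lambda=\frac{\beta}{r_k}$. I would rescale by the Heisenberg dilation $\delta_\lambda$ centred at $x$, so that $B_{Ar_k}(x_k)$ is mapped to a set of bounded Korányi radius $\lambda Ar_k=A\beta$, use the profile bound $\breve U_{x,\lambda,s_k}\lesssim \lambda(1+\lambda|\cdot|)^{-2}$, and read off the powers of $\lambda$ by homogeneity, the volume element contributing a factor $\lambda^{-4}$. The dominant contributions come from the terms $\breve U^6|y|^4$ and $\breve U^6\lambda^2|y|^6$, which both scale like $s_k^2\lambda^{-2}$, while the remaining terms produce $O(s_k^2\lambda^{-3})$ and $O(s_k^2\lambda^{-4})$; since the rescaled domain is bounded, all the resulting $w$-integrals are $O(1)$. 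Substituting $\lambda=\frac{\beta}{r_k}$ gives $O(s_k^2 r_k^2)$, whence the claim together with the reduction error $O(s_k^4/\lambda^6)$.

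The step I expect to require the most care is the uniformity of the bound over the whole top face $\{|x-x_k|\le R_k\}$: when the bubble centre $x$ is displaced from $x_k$ by up to $R_k\gg r_k$, after rescaling the integration domain is translated to Korányi distance $\sim\lambda|x-x_k|$ from the origin, where the profile $(1+\lambda|\cdot|)^{-2}$ is small. I would check that this displacement only decreases the integral, so that the worst case is $x\approx x_k$, giving exactly $O(s_k^2 r_k^2)$; this is also consistent with evaluating the estimate of Lemma~\ref{StimaLaterale} at the corner $\lambda=\frac{\beta}{r_k}$, $|x-x_k|=R_k$, where the bound $\lambda^{-6}r_k^{-4}s_k^2$ reduces to $s_k^2 r_k^2$.
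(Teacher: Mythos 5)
Your proposal follows exactly the route the paper intends for this lemma (which it only sketches with ``in a similar way''): the reduction to $\ci{J}_J(\breve{U}_{x,\lambda,s_k})+O(s_k^4/\lambda^6)$ via Lemmas~\ref{StimaDifferenziale} and \ref{LSapprox}, the localization of the error to $B_{Ar_k}(x_k)$ using the improved version of \eqref{StimaBollaApprossimata}, and the rescaling by $\delta_\lambda$ with $\lambda r_k=\beta$ so that the dominant terms $\breve{U}^6|y|^4$ and $\breve{U}^6\lambda^2|y|^6$ contribute $O(s_k^2\lambda^{-2})=O(s_k^2r_k^2)$. The term-by-term power counting and the uniformity check over $|x-x_k|\le R_k$ are correct, so the proof is sound and matches the paper's argument.
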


Thanks to these estimates, we can prove that $J$ is the required counterexample.

\begin{proof}[Proof of Theorem \ref{Teorema}]
 Choosing $J$ as described at the beginning of this section, choose $r_k=2^{-k}$, $R_k = C2^{-k}$ with $C$ large enough, $\alfa, \beta\gg 1$, $s_k= 2^{-2^k}$. Then, thanks to Lemmas \ref{StimaInterna}, \ref{StimaFondo}, \ref{StimaLaterale} and \ref{StimaCima}, $\max_{\Omega_k}\ci{J}_J(U_{x,\lambda}+v_{x,\lambda,s_k})>\max_{\de\Omega_k}\ci{J}_J(U_{x,\lambda}+v_{x,\lambda,s_k})$, and therefore there exists a critical point of $\ci{J}_J(U_{x,\lambda}+v_{x,\lambda,s_k})$ restricted to $\ci{M}$ in $\Omega_k$. By Proposition \ref{ProposizioneLS}, it is also a free critical point, and therefore a solution to Equation \eqref{EquazioneYamabeCR}.
 Finally since
 $$|B_{r_k}(x_k)|^{\frac{1}{4}}\max_{B_{r_k}(x_k)}(U_{x,\lambda}+v_{x,\lambda,s_k}) \ge$$
 $$\ge \N{(U_{x,\lambda}+v_{x,\lambda,s_k})}_{L^4(B_{r_k}(x_k))} \ge \N{U_{x,\lambda}}_{L^4(B_{r_k}(x_k))} - \N{v_{x,\lambda,s_k}}_X $$
 then $\max_{B_{r_k}(x_k)}(U_{x,\lambda}+v_{x,\lambda,s_k})\to\infty$.
\end{proof}

\appendix
\section{Estimates for the sublaplacian and Webster curvature}\label{Appendice}
On $\H^1$ consider the CR structure on $H(\H^1)$ generated by the vector field
$$\widetilde{Z} = Z+ f\con{Z}$$
where $|f|<1$ is a smooth complex function.

\begin{proposizione}\label{StimaCurvatura}
 There exist a constant $C$ such that
 $$\left|\widetilde{R} +\con{Z}^2f+Z\con{f} +\con{f}Z\con{Z}f +\con{f}\con{Z}Zf +fZ1\con{Z}\con{f}+f\con{Z}Z\con{f}+ \right.$$
 $$ \left.+|Zf|^2 +3|\con{Z}f|^2 \right| \le C\left( |f|^2|\nabla^2 f|+ |f||\nabla f|^2\right).$$
\end{proposizione}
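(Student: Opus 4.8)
The plan is to compute the Tanaka--Webster connection of the deformed structure directly from the frame and to Taylor-expand the resulting Webster curvature in $f$. The crucial simplification is that $\theta$ is held fixed, so its Reeb field $T=\de/\de t$ is unchanged; the only data that vary are the admissible frame $Z_1=\widetilde{Z}=Z+f\con{Z}$, its conjugate $Z_{\con{1}}=\con{Z}+\con{f}Z$, and the Levi form. Using $[Z,\con{Z}]=-2iT$ and $[T,Z]=[T,\con{Z}]=0$, a direct Leibniz computation gives
$$[Z_1,Z_{\con{1}}] = -2i(1-|f|^2)\,T + \bigl(Z\con{f}+f\con{Z}\con{f}\bigr)Z - \bigl(\con{Z}f+\con{f}Zf\bigr)\con{Z}.$$
Inverting the linear relation between $\{Z_1,Z_{\con{1}}\}$ and $\{Z,\con{Z}\}$, whose matrix has determinant $1-|f|^2$, rewrites the horizontal part in the frame $\{Z_1,Z_{\con{1}}\}$; the coefficient of $T$ identifies the Levi form $h_{1\con{1}}=1-|f|^2$ (up to the normalization fixed by the convention for $d\theta$), so $h^{1\con{1}}=1+|f|^2+O(|f|^4)$.

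Next I would solve the structure equations for $\omega_1{}^1$ using the defining properties of the Tanaka--Webster connection: its torsion on $H\times H$ is purely along $T$, its torsion in the $T$-direction sends $(1,0)$ to $(0,1)$, it is metric ($\omega_1{}^1+\con{\omega_1{}^1}=h^{1\con{1}}dh_{1\con{1}}$), and $\omega_{\con{1}}{}^{\con{1}}=\con{\omega_1{}^1}$. Requiring the horizontal part of $[Z_1,Z_{\con{1}}]$ to be absorbed by $\nabla$ yields $\omega_1{}^1(Z_{\con{1}})=-Z\con{f}+\cdots$ and $\omega_{\con{1}}{}^{\con{1}}(Z_1)=-\con{Z}f+\cdots$, the $T$-torsion condition gives $\omega_1{}^1(T)=-(Tf)\con{f}/(1-|f|^2)$, and metric compatibility fixes $\omega_1{}^1(Z_1)$. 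Since $\omega_1{}^1\wedge\omega_1{}^1=0$, the curvature two-form is $d\omega_1{}^1$, and the Webster scalar is its $\theta^1\wedge\theta^{\con{1}}$ component,
$$\widetilde{R} = h^{1\con{1}}\Bigl(Z_1\bigl(\omega_1{}^1(Z_{\con{1}})\bigr)-Z_{\con{1}}\bigl(\omega_1{}^1(Z_1)\bigr)-\omega_1{}^1\bigl([Z_1,Z_{\con{1}}]\bigr)\Bigr)$$
up to the convention-dependent constant. At first order this collapses to $\widetilde{R}=-Z^2\con{f}-\con{Z}^2 f+\cdots$, confirming the displayed linear part (the ``$Z\con{f}$'' in the statement should read ``$Z^2\con{f}$'', and ``$fZ1\con{Z}\con{f}$'' should read ``$fZ\con{Z}\con{f}$''), and carrying the expansion to second order produces the six quadratic terms.

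The remainder estimate is then structural. Counting derivatives in the parabolic CR grading --- $Z,\con{Z}$ of weight one and $T=\tfrac{i}{2}[Z,\con{Z}]$ of weight two, consistently with the $\Gamma^2$-norm --- every connection coefficient has the schematic form $g(f,\con{f})$ times one weight-one derivative of $f$ for the horizontal components and $g(f,\con{f})$ times $Tf$ or $T\con{f}$ for the $T$-component, with $g$ smooth (built from $(1-|f|^2)^{-1}$ and polynomials in $f,\con{f}$). Evaluating the curvature formula shows every term of $\widetilde{R}$ is $g(f,\con{f})\,\partial^2 f$ or $g(f,\con{f})\,\partial f\,\partial f$, of total weight two. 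Writing $g=g(0)+O(|f|)$: the part $g(0)\,\partial^2 f$ gives the linear terms; $g(0)\,\partial f\,\partial f$ gives $|Zf|^2+3|\con{Z}f|^2$; the $O(|f|)\,\partial^2 f$ part gives the four terms $\con{f}Z\con{Z}f,\ \con{f}\con{Z}Zf,\ fZ\con{Z}\con{f},\ f\con{Z}Z\con{f}$ (the $T$-component contributing the combination $\con{f}Tf$ through $[Z,\con{Z}]$); and everything of higher order is $O(|f|^2)\,\partial^2 f+O(|f|)\,\partial f\,\partial f$, bounded exactly by $|f|^2|\nabla^2 f|+|f||\nabla f|^2$.

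The main obstacle is the second-order bookkeeping. Several independent sources --- the expansion of $h^{1\con{1}}=(1-|f|^2)^{-1}$, the frame-inversion factors, the quadratic part $-(\con{f}\,df+f\,d\con{f})$ of $dh_{1\con{1}}$, and the products of two linear connection terms in $\omega_1{}^1([Z_1,Z_{\con{1}}])$ --- feed into the same quadratic monomials, so they must be collected consistently. Getting the relative coefficients right, in particular the factor $3$ in $3|\con{Z}f|^2$, is the delicate check that the computation is correct; this is essentially the Webster curvature computation underlying the Rossi-sphere estimates of \cite{CMY}, carried out for a general deforming function $f$.
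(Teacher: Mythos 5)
Your proposal follows essentially the same route as the paper: there too, $\omega_1{}^1$ is obtained from the structure equations $d\theta^1=\theta^1\wedge\omega_1{}^1+A^1_{\con{1}}\theta\wedge\theta^{\con{1}}$ and $\omega_1{}^1+\omega_{\con{1}}{}^{\con{1}}=dh_{1\con{1}}$ with $h_{1\con{1}}=2(1-|f|^2)$, and $\widetilde{R}$ is read off from the $\theta^1\wedge\theta^{\con{1}}$ component of $d\omega_1{}^1$ via Lee's formula; you merely phrase the same computation through commutators of the frame rather than through the dual coframe, and your leading connection coefficients, as well as your reading of the two typos in the statement ($Z^2\con{f}$ and $fZ\con{Z}\con{f}$), agree with what the paper actually derives. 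The only real difference is organizational --- the paper computes $\widetilde{R}$ exactly and estimates at the end, while you Taylor-expand with a weight-counting remainder --- so the one step you leave unexecuted is precisely the coefficient bookkeeping (e.g.\ the $3$ in $3|\con{Z}f|^2$, which in the paper arises from $-\tfrac{1}{1-|f|^2}\bigl(|Zf|^2+2|\con{Z}f|^2\bigr)$ together with the leading part of $-\tfrac{1}{(1-|f|^2)^2}\bigl|Z\con{f}+\con{f}^2Zf+\con{Z}(|f|^2)\bigr|^2$), which is routine given your setup.
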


\begin{proof}
 Define $\cerchio{\theta^1}$ in such a way that $(\cerchio{\theta^1},\cerchio{\theta^{\overline{1}}},\theta)$ is the dual frame to $(Z,\con{Z},T)$, and $\theta^1$ in such a way that $(\theta^1,\theta^{\overline{1}},\theta)$ is the dual frame to $(\widetilde{Z},\con{\widetilde{Z}},T)$.
 Then
 $$\theta^1 = \frac{1}{1-|f|^2}\cerchio{\theta^1} -\frac{\con{f}}{1-|f|^2}\cerchio{\theta^{\con{1}}}$$
 and
 $$\cerchio{\theta^1} = \theta^1 + \con{f}\theta^{\con{1}}.$$
 We recall (see Section 4 in \cite{L}) that the Tanaka-Webster connection is determined uniquely by the fact that the connection form $\omega_1^1$ and the torsion form $A_{\con{1}}^1$ satisfy the structure equations
 \begin{equation}\label{EquazioniDiStruttura}
  \left\{
   \begin{array}{ll}
    d\theta^1 = \theta^1\wedge\omega_1^1 + A_{\con{1}}^1\theta\wedge\theta^{\con{1}}\\
    \omega_1^1+\omega_{\con{1}}^{\con{1}} = dh_{1\con{1}}
   \end{array}\right.
 \end{equation}
 where $h_{1\con{1}}$ is the coefficient of Levi form, determined by $d\theta=ih_{1\con{1}}\theta^1\wedge\theta^{\con{1}}$.
 We need to determine $\omega_1^1$.
 $$d\theta^1 = \con{Z}\frac{1}{1-|f|^2}\cerchio{\theta^{\con{1}}}\wedge\cerchio{\theta^1} + T\frac{1}{1-|f|^2}\theta\wedge\cerchio{\theta^1} - Z\frac{\con{f}}{1-|f|^2}\cerchio{\theta^1}\wedge\cerchio{\theta^{\con{1}}} - T\frac{\con{f}}{1-|f|^2}\theta\wedge\cerchio{\theta^{\con{1}}} =$$
 $$= -\left(Z\frac{\con{f}}{1-|f|^2}+\con{Z}\frac{1}{1-|f|^2}\right)(\theta^1 + \con{f}\theta^{\con{1}})\wedge(f\theta^1 + \theta^{\con{1}}) + T\frac{1}{1-|f|^2}\theta\wedge(\theta^1 + \con{f}\theta^{\con{1}}) + $$
 $$ - T\frac{\con{f}}{1-|f|^2}\theta\wedge(\theta^{\con{1}}+f\theta^1) = $$
 $$ = -\left(Z\frac{\con{f}}{1-|f|^2}+\con{Z}\frac{1}{1-|f|^2}\right)(1-|f|^2)\theta^1\wedge\theta^{\con{1}}+$$
 $$+ \left( T\frac{1}{1-|f|^2} -fT\con{f}\frac{1}{1-|f|^2} -|f|^2T\frac{1}{1-|f|^2}\right)\theta\wedge\theta^1 +$$
 $$+\left(\con{f}T\frac{1}{1-|f|^2}-\con{f}T\frac{1}{1-|f|^2}-\frac{1}{1-|f|^2}T\con{f}\right)\theta\wedge\theta^{\con{1}}=$$
 $$ = -\left(Z\con{f}-\con{f}\frac{1}{1-|f|^2}Z(-|f|^2) -\frac{1}{1-|f|^2}\con{Z}(-|f|^2)\right)\theta^1\wedge\theta^{\con{1}}+$$
 $$+ \left( -(1-|f|^2)\frac{1}{(1-|f|^2)^2}T(-|f|^2) -fT\con{f}\frac{1}{1-|f|^2} \right)\theta\wedge\theta^1 -\frac{1}{1-|f|^2}T\con{f}\theta\wedge\theta^{\con{1}}=$$
 $$ = -\left(Z\con{f} +\frac{1}{1-|f|^2}|f|^2Z\con{f} + \frac{1}{1-|f|^2}\con{f}^2Zf +\frac{1}{1-|f|^2}\con{Z}(|f|^2)\right)\theta^1\wedge\theta^{\con{1}}+$$
 $$+ \frac{1}{1-|f|^2}\con{f}Tf\theta\wedge\theta^1 - \frac{1}{1-|f|^2}T\con{f}\theta\wedge\theta^{\con{1}}=$$
 $$= -\frac{1}{1-|f|^2}\left(Z\con{f} + \con{f}^2Zf +\con{Z}(|f|^2)\right)\theta^1\wedge\theta^{\con{1}}+$$
 \begin{equation}\label{ContoAppendice1}
  + \frac{1}{1-|f|^2}\con{f}Tf\theta\wedge\theta^1 - \frac{1}{1-|f|^2}T\con{f}\theta\wedge\theta^{\con{1}}.
 \end{equation}
 Since $h_{1\con{1}} = 2(1-|f|^2)$,
 $$dh_{1\con{1}} = -2d(|f|^2) = h_{1\con{1}}\omega_{\con{1}}^{\con{1}} + h_{1\con{1}}\omega_1^1,$$
 therefore
 $$\omega_1^1+\omega_{\con{1}}^{\con{1}} = -2\frac{1}{2(1-|f|^2)}\left(T(|f|^2)\theta + Z(|f|^2)\cerchio{\theta^1} + \con{Z}(|f|^2)\cerchio{\theta^{\con{1}}}\right) =$$
 \begin{equation}\label{ContoAppendice2}
  = -\frac{1}{(1-|f|^2)}\left(T(|f|^2)\theta + \left(Z(|f|^2) + f\con{Z}(|f|^2)\right)\theta^1 + \left(\con{Z}(|f|^2)+\con{f}Z(|f|^2)\right)\theta^{\con{1}}\right).
 \end{equation}
 From formulas \eqref{EquazioniDiStruttura}, \eqref{ContoAppendice1} and \eqref{ContoAppendice2} we deduce that
 $$\omega_1^1 = -\frac{1}{1-|f|^2}\con{f}Tf\theta + \con{Z}f\theta^1 -\frac{1}{1-|f|^2}\left(Z\con{f} + \con{f}^2Zf +\con{Z}(|f|^2)\right)\theta^{\con{1}}.$$
 Now, using formula 4.7 from \cite{L} we get that the Webster curvature is determined by
 $$d\omega_1^1 \operatorname{mod}\theta = R\theta^1\wedge\theta^{\con{1}}.$$
 Therefore we compute
 $$d\omega_1^1 \operatorname{mod}\theta = -\frac{1}{1-|f|^2}\con{f}Tf ih_{1\con{1}}\theta^1\wedge\theta^{\con{1}} + \con{\widetilde{Z}}\left(\con{Z}f\right)\theta^{\con{1}}\wedge\theta^1+$$
 $$-\con{Z}f\frac{1}{1-|f|^2}\left(Z\con{f} + \con{f}^2Zf +\con{Z}(|f|^2)\right)\theta^1\wedge\theta^{\con{1}}+$$
 $$-\widetilde{Z}\left(\frac{1}{1-|f|^2}\left(Z\con{f} + \con{f}^2Zf +\con{Z}(|f|^2)\right)\right) \theta^1\wedge\theta^{\con{1}}+$$
 $$- \frac{1}{1-|f|^2}\left(Z\con{f} + \con{f}^2Zf +\con{Z}(|f|^2)\right)\frac{1}{1-|f|^2}\left(\con{Z}f + f^2\con{Z}\con{f} +Z(|f|^2)\right)\theta^1\wedge\theta^{\con{1}} =$$
 $$= -2i\con{f}Tf\theta^1\wedge\theta^{\con{1}} - (\con{Z}+\con{f}Z)\left(\con{Z}f\right)\theta^1\wedge\theta^{\con{1}}+$$
 $$-\con{Z}f\frac{1}{1-|f|^2}\left(Z\con{f} + \con{f}^2Zf +\con{Z}(|f|^2)\right)\theta^1\wedge\theta^{\con{1}}+$$
 $$-(Z+f\con{Z})\left(\frac{1}{1-|f|^2}\left(Z\con{f} + \con{f}^2Zf +\con{Z}(|f|^2)\right)\right) \theta^1\wedge\theta^{\con{1}}+$$
 $$-\frac{1}{(1-|f|^2)^2}\left|Z\con{f} + \con{f}^2Zf +\con{Z}(|f|^2)\right|^2.$$
 Therefore
 $$R= -2i\con{f}Tf\theta^1\wedge\theta^{\con{1}} - (\con{Z}^2f+\con{f}Z\con{Z}f)+$$
 $$-\frac{1}{1-|f|^2}|\con{Z}f|^2 -\frac{1}{1-|f|^2}\con{f}^2Zf\con{Z}f -\frac{1}{1-|f|^2}\con{Z}f\con{Z}(|f|^2)+$$
 $$+\frac{1}{(1-|f|^2)^2}Z(-|f|^2)\left(Z\con{f} + \con{f}^2Zf +\con{Z}(|f|^2)\right)+$$
 $$-\frac{1}{1-|f|^2}\left(Z^2\con{f} + Z(\con{f}^2)Zf + \con{f}^2Z^2f +Z\con{Z}(|f|^2)\right) +$$
 $$+f\frac{1}{(1-|f|^2)^2}\con{Z}(-|f|^2)\left(Z\con{f} + \con{f}^2Zf +\con{Z}(|f|^2)\right)+$$
 $$-f\frac{1}{1-|f|^2}\left(\con{Z}Z\con{f} +\con{Z}(\con{f}^2)Zf + \con{f}^2\con{Z}Zf\right.+$$
 $$ \left.+\con{f}\con{Z}^2f + f\con{Z}^2\con{f}+2\con{Z}f\con{Z}\con{f}\right)+ $$
 $$-\frac{1}{(1-|f|^2)^2}\left|Z\con{f} + \con{f}^2Zf +\con{Z}(|f|^2)\right|^2 =$$
 $$= \con{f}(Z\con{Z}f-\con{Z}Zf) - \con{Z}^2f -\frac{1}{1-|f|^2}Z^2\con{f} -\frac{1}{1-|f|^2}\con{f}^2Z^2f +$$
 $$-f\frac{1}{1-|f|^2}\left(\con{f}\con{Z}^2f + f\con{Z}^2\con{f}\right)- \con{f}Z\con{Z}f+$$
 $$-\frac{1}{1-|f|^2}\left(fZ\con{Z}\con{f} + \con{f}Z\con{Z}f +Zf\con{Z}\con{f} +Z\con{f}\con{Z}f\right) +$$
 $$-\frac{f}{1-|f|^2}\con{Z}Z\con{f} -\frac{|f|^2}{1-|f|^2} \con{f}\con{Z}Zf+$$
 $$-\frac{1}{1-|f|^2}|\con{Z}f|^2 -\frac{\con{f}^2}{1-|f|^2}Zf\con{Z}f -\frac{\con{f}}{1-|f|^2}(\con{Z}f)^2 -\frac{f}{1-|f|^2}\con{Z}f\con{Z}\con{f}+$$
 $$-\frac{1}{(1-|f|^2)^2}(fZ\con{f} + \con{f}Zf)\left(Z\con{f} + \con{f}^2Zf \right)+$$
 $$-\frac{1}{(1-|f|^2)^2}\left|Z(|f|^2)\right|^2 -2\frac{1}{1-|f|^2}\left(\con{f}Z(\con{f})Zf\right) +$$
 $$-f\frac{1}{(1-|f|^2)^2}(\con{f}\con{Z}f+ f\con{Z}\con{f})\left(Z\con{f} + \con{f}^2Zf +\con{f}\con{Z}f+ f\con{Z}\con{f}\right)+$$
 $$-f\frac{1}{1-|f|^2}\left(2\con{f}|Zf|^2 + 2\con{Z}f\con{Z}\con{f}\right)+ $$
 $$-\frac{1}{(1-|f|^2)^2}\left|Z\con{f} + \con{f}^2Zf +\con{Z}(|f|^2)\right|^2 =$$
 $$= - \frac{1}{1-|f|^2}\con{Z}^2f -\frac{1}{1-|f|^2}Z^2\con{f} -\frac{1}{1-|f|^2}\con{f}^2Z^2f -f\frac{1}{1-|f|^2}\left(f\con{Z}^2\con{f}\right)+$$
 $$-\frac{\con{f}}{1-|f|^2}Z\con{Z}f -\frac{\con{f}}{1-|f|^2}\con{Z}Zf -\frac{f}{1-|f|^2}Z\con{Z}\con{f}-\frac{f}{1-|f|^2}\con{Z}Z\con{f}+$$
 $$-\frac{1}{1-|f|^2}\left(Zf\con{Z}\con{f} +Z\con{f}\con{Z}f\right) +$$
 $$-\frac{1}{1-|f|^2}|\con{Z}f|^2 -\frac{\con{f}^2}{1-|f|^2}Zf\con{Z}f -\frac{\con{f}}{1-|f|^2}(\con{Z}f)^2 -\frac{f}{1-|f|^2}\con{Z}f\con{Z}\con{f}+$$
 $$-\frac{1}{(1-|f|^2)^2}\left(f(Z\con{f})^2 + \con{f}|f|^2ZfZ\con{f} + \con{f}ZfZ\con{f} +\con{f}^3(Zf)^2\right)+$$
 $$-\frac{1}{(1-|f|^2)^2}\left|Z(|f|^2)\right|^2-2\frac{1}{1-|f|^2}\left(\con{f}Z(\con{f})Zf\right) +$$
 $$-\frac{|f|^2}{(1-|f|^2)^2}\left(|\con{Z}f|^2 + \con{f}^2Zf\con{Z}f +\con{f}(\con{Z}f)^2 + f\con{Z}f\con{Z}\con{f}\right)+$$
 $$-\frac{1}{(1-|f|^2)^2}\left(f^2Z\con{f}\con{Z}\con{f} + |f|^4|Zf|^2 +|f|^2f\con{Z}f\con{Z}\con{f}+ f^3(\con{Z}\con{f})^2\right)+$$
 $$-2\frac{|f|^2}{1-|f|^2}|Zf|^2 - 2\frac{1}{1-|f|^2}f\con{Z}f\con{Z}\con{f} + $$
 $$-\frac{1}{(1-|f|^2)^2}\left|Z\con{f} + \con{f}^2Zf +\con{Z}(|f|^2)\right|^2 =$$
 $$= - \frac{1}{1-|f|^2}\con{Z}^2f -\frac{1}{1-|f|^2}Z^2\con{f} -\frac{\con{f}^2}{1-|f|^2}Z^2f -\frac{f^2}{1-|f|^2}\con{Z}^2\con{f}+$$
 $$-\frac{\con{f}}{1-|f|^2}Z\con{Z}f -\frac{\con{f}}{1-|f|^2}\con{Z}Zf -\frac{f}{1-|f|^2}Z\con{Z}\con{f}-\frac{f}{1-|f|^2}\con{Z}Z\con{f}+$$
 $$-\frac{1}{1-|f|^2}\left(|Zf|^2 +|\con{Z}f|^2\right) +$$
 $$-\frac{1}{1-|f|^2}|\con{Z}f|^2 -\frac{\con{f}^2}{(1-|f|^2)^2}Zf\con{Z}f -\frac{\con{f}}{(1-|f|^2)^2}(\con{Z}f)^2 -\frac{3-|f|^2}{(1-|f|^2)^2}f\con{Z}f\con{Z}\con{f}+$$
 $$-\frac{f}{(1-|f|^2)^2}(Z\con{f})^2 -\frac{1}{(1-|f|^2)^2}\left(-|f|^2 + 3\right)\con{f}ZfZ\con{f} -\frac{1}{(1-|f|^2)^2}\con{f}^3(Zf)^2+$$
 $$-\frac{1}{(1-|f|^2)^2}\left|Z(|f|^2)\right|^2 +$$
 $$-\frac{f^2}{(1-|f|^2)^2}Z\con{f}\con{Z}\con{f} -\frac{2|f|^2 -|f|^4}{(1-|f|^2)^2}|Zf|^2 -\frac{f^3}{(1-|f|^2)^2}(\con{Z}\con{f})^2+$$
 $$-\frac{1}{(1-|f|^2)^2}\left|Z\con{f} + \con{f}^2Zf +\con{Z}(|f|^2)\right|^2.$$
 From this follows the thesis.
\end{proof}

\begin{proposizione}\label{DifferenzaSublaplaciani}
 There exist a constant $C$ such that
 $$\left|(\Delta_{\widetilde{J}}-\Delta_{J_0})u-\frac{1}{2}\left(f\con{Z}^2u+ \con{f}Z^2u\right)-\frac{3}{4}\left(Z\con{f}Zu +\con{Z}f\con{Z}u\right)\right| \lesssim$$
 $$\le C\left( |f|^2\Delta_{J_0}u +|f|^3|\nabla^2u| + |f|^2|\nabla f||\nabla u|\right).$$
\end{proposizione}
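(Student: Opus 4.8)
The plan is to exploit the fact that the volume form $\theta\wedge d\theta$ depends only on the contact form $\theta$ and not on the CR structure, so that it is one and the same for $\widetilde{J}$ and for $J_0$. Both sublaplacians are then the divergence, with respect to this single volume form, of the respective horizontal gradients, whence
$$\Delta_{\widetilde{J}}u-\Delta_{J_0}u=\div\left(\nabla_{\widetilde{J}}u-\nabla_{J_0}u\right).$$
In this way the whole estimate is reduced to expanding the difference of the two gradients in powers of $f$ and applying a single, fixed divergence term by term.

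To compute $\nabla_{\widetilde{J}}u$ I would use the Levi form in the frame $\widetilde{Z}$, which equals $h_{1\con{1}}=2(1-|f|^2)$ by the computation in the proof of Proposition \ref{StimaCurvatura}. Since in the Webster metric $\langle\widetilde{Z},\widetilde{Z}\rangle=\langle\con{\widetilde{Z}},\con{\widetilde{Z}}\rangle=0$ and $\langle\widetilde{Z},\con{\widetilde{Z}}\rangle=h_{1\con{1}}$, the horizontal gradient is
$$\nabla_{\widetilde{J}}u=h^{1\con{1}}\left((\con{\widetilde{Z}}u)\,\widetilde{Z}+(\widetilde{Z}u)\,\con{\widetilde{Z}}\right),\qquad h^{1\con{1}}=\frac{1}{2(1-|f|^2)}.$$
Substituting $\widetilde{Z}=Z+f\con{Z}$, $\con{\widetilde{Z}}=\con{Z}+\con{f}Z$ and collecting, one rewrites $\nabla_{\widetilde{J}}u=aZ+b\con{Z}$ with $a,b$ explicit polynomials in $f,\con{f}$ times $Zu,\con{Z}u$. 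The key simplification is that, exactly as on the standard Heisenberg group, $Z$ and $\con{Z}$ are divergence free for $\theta\wedge d\theta=2i\,dt\wedge dz\wedge d\con{z}$; hence $\div(aZ+b\con{Z})=Za+\con{Z}b$, and no connection terms enter the final divergence.

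Next I would expand $a-\tfrac12\con{Z}u$ and $b-\tfrac12 Zu$ (the subtractions being the coefficients of $\nabla_{J_0}u$) in powers of $f$ via $h^{1\con{1}}=\tfrac12(1+|f|^2+\cdots)$, and apply $Z,\con{Z}$ by Leibniz. The part homogeneous of degree one in $f$ consists of terms of two shapes, namely a factor $f$ or $\con{f}$ times a second derivative of $u$, and a first derivative of $f$ times a first derivative of $u$; these are the terms that must combine into the first-order operator subtracted on the left-hand side of the statement. Everything else carries at least two factors of $f$ (counting its derivatives), and one must match it with the three families on the right: the genuinely second-order piece $|f|^2(Z\con{Z}+\con{Z}Z)u$, controlled by $|f|^2\Delta_{J_0}u$ and $|f|^3|\nabla^2u|$, and the pieces where a derivative lands on $f$, to be controlled by $|f|^2|\nabla f||\nabla u|$.

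The delicate point, which I expect to be the main obstacle, is precisely this last matching, together with the determination of the exact coefficients $\tfrac12$ and $\tfrac34$. Differentiating the degree-two coefficients produces, besides the expected $|f|^2$ times second derivatives of $u$, cross terms such as $Z(|f|^2)\,\con{Z}u$ and $\con{Z}(|f|^2)\,Zu$; these are a priori only of order $|f|\,|\nabla f|\,|\nabla u|$, so to fit them into the stated remainder one has to check that they cancel against the normalization contributions or are otherwise reabsorbed, using $|f|<1$ and the identities $Z(|f|^2)=\con{f}Zf+fZ\con{f}$ and its conjugate. This is the analogue here of the lengthy bookkeeping carried out for Proposition \ref{StimaCurvatura}. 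Should the pointwise cancellations prove awkward, a more robust route is to compare the two Dirichlet forms $\int\langle\nabla_{\widetilde{J}}u,\nabla_{\widetilde{J}}v\rangle$ and $\int\langle\nabla_{J_0}u,\nabla_{J_0}v\rangle$ and read off the first variation in the direction $f$, which isolates the degree-one operator directly.
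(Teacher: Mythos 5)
Your reduction of $\Delta_{\widetilde{J}}-\Delta_{J_0}$ to the divergence, with respect to the single volume form $\theta\wedge d\theta$, of the difference of the two horizontal gradients is a genuinely different route from the paper's: the paper computes the connection form $\omega_1^1$ from the structure equations and uses the trace-of-the-horizontal-Hessian formula
$$\Delta_b u = h^{1\con{1}}\left(\widetilde{Z}\con{\widetilde{Z}}u+\con{\widetilde{Z}}\widetilde{Z}u-\omega_1^1(\con{\widetilde{Z}})\widetilde{Z}u-\omega_{\con{1}}^{\con{1}}(\widetilde{Z})\con{\widetilde{Z}}u\right).$$
Your route is in principle cleaner: the ingredients you list ($h_{1\con{1}}=2(1-|f|^2)$, the gradient formula $\nabla_{\widetilde{J}}u=h^{1\con{1}}((\con{\widetilde{Z}}u)\widetilde{Z}+(\widetilde{Z}u)\con{\widetilde{Z}})$ in the frame $\widetilde{Z}$, and the fact that $Z$ and $\con{Z}$ are divergence free for $\theta\wedge d\theta=2i\,dt\wedge dz\wedge d\con{z}$) are all correct, and the Tanaka--Webster connection never has to be touched, whereas the paper needs the full $\omega_1^1$ here.

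The gap is that the proposal stops exactly where the content of the proposition lies, namely at the expansion and the identification of the constants, and when one completes your setup the constants do not come out as stated. From $\widetilde{Z}u=Zu+f\con{Z}u$ and $\con{\widetilde{Z}}u=\con{Z}u+\con{f}Zu$ one gets
$$\nabla_{\widetilde{J}}u-\nabla_{J_0}u=\frac{\con{f}Zu+|f|^2\con{Z}u}{1-|f|^2}\,Z+\frac{f\con{Z}u+|f|^2Zu}{1-|f|^2}\,\con{Z},$$
and applying $\div(aZ+b\con{Z})=Za+\con{Z}b$ the part of degree one in $f$ is
$$f\con{Z}^2u+\con{f}Z^2u+\left(Z\con{f}\right)Zu+\left(\con{Z}f\right)\con{Z}u,$$
i.e.\ both families carry coefficient $1$, not the stated $\frac{1}{2}$ and $\frac{3}{4}$. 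The Dirichlet-form cross-check you propose at the end detects the same thing: $\int u(\Delta_{\widetilde{J}}-\Delta_{J_0})u=-\int\left(\con{f}(Zu)^2+f(\con{Z}u)^2\right)+O(|f|^2)$, and integrating the candidate first-order operator by parts shows that matching this quadratic form for all $u$ and $f$ forces the two families to have \emph{equal} coefficients. Note also that the ratio of the two coefficients is independent of any overall normalization of $G_\theta$ or of $\Delta_J$, so the discrepancy cannot be absorbed into conventions; you must track down where the factor comes from (in your computation or in the statement) before the write-up is complete. The remainder estimate, by contrast, is routine in your scheme: every leftover term visibly carries at least two factors of $f$ or its derivatives and is dominated by $|f|^2|\Delta_{J_0}u|+|f|^3|\nabla^2u|+|f|^2|\nabla f||\nabla u|$, with no delicate cancellation needed.
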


\begin{proof}
 Using the notation of the proof of the former Proposition,
 $$\Delta_bu = h^{1\con{1}}(\widetilde{Z}\con{\widetilde{Z}} + \con{\widetilde{Z}}\widetilde{Z} - \omega_1^1(\con{\widetilde{Z}})\widetilde{Z}u - \omega_{\con{1}}^{\con{1}}(\widetilde{Z})\con{\widetilde{Z}}u) =$$
 $$= h^{1\con{1}}(Z+f\con{Z})(\con{Z}+h^{1\con{1}}\con{f}Z)u + (\con{Z}+\con{f}Z)(Z+f\con{Z})u +$$
 $$+h^{1\con{1}}\frac{1}{1-|f|^2}\left(Z\con{f} + \con{f}^2Zf +\con{Z}(|f|^2)\right)(Z+f\con{Z})u +$$
 $$+ h^{1\con{1}}\frac{1}{1-|f|^2}\left(\con{Z}f + f^2\con{Z}\con{f} +Z(|f|^2)\right)(\con{Z}+\con{f}Z)u=$$
 $$= h^{1\con{1}}(Z\con{Z}u + f\con{Z}^2u + Z\con{f}Zu + |f|^2\con{Z}Zu) + $$
 $$+ h^{1\con{1}}(\con{Z}Zu + \con{f}Z^2u + \con{Z}f\con{Z}u + |f|^2Z\con{Z}u) + $$
 $$+h^{1\con{1}}\frac{1}{1-|f|^2}\left(Z\con{f} + \con{f}^2Zf +\con{Z}(|f|^2)\right)(Z+f\con{Z})u +$$
 $$+ h^{1\con{1}}\frac{1}{1-|f|^2}\left(\con{Z}f + f^2\con{Z}\con{f} +Z(|f|^2)\right)(\con{Z}+\con{f}Z)u=$$
 $$= \frac{1}{1-|f|^2}((1+|f|^2)\Delta_{J_0}u + f\con{Z}^2u + \con{f}Z^2u + Z\con{f}Zu + \con{Z}f\con{Z}u ) + $$
 $$+\frac{1}{2(1-|f|^2)^2}\left(Z\con{f} + 2\con{f}^2Zf +2\con{f}\con{Z}f +f\con{Z}\con{f}+f|f|^2\con{Z}\con{f} +|f|^2Z\con{f}\right)Zu +$$
 $$+ \frac{1}{2(1-|f|^2)^2}\left(\con{Z}f + 2f^2\con{Z}\con{f} +2fZ\con{f} +\con{f}Zf + \con{f}|f|^2Zf +|f|^2\con{Z}f\right)\con{Z}u.$$
 From this the thesis follows.
\end{proof}


\textsc{Claudio Afeltra, Department of Mathematics, University of Trento, Via Sommarive 14, 38123 Povo (Trento), Italy}
 
\textit{Email address}:  \texttt{claudio.afeltra@unitn.it}

\vspace{3mm}

\textsc{Andrea Pinamonti, Department of Mathematics, University of Trento, Via Sommarive 14, 38123 Povo (Trento), Italy}
 
\textit{Email address}:  \texttt{andrea.pinamonti@unitn.it}

\end{document}